\definecolor{darkergreen}{rgb}{.2,.7,.2}
\theoremstyle{plain}
\newtheorem{theorem}{Theorem}
\newtheorem{proposition}[theorem]{Proposition}
\newtheorem{lemma}[theorem]{Lemma}
\newtheorem{corollary}[theorem]{Corollary}
\theoremstyle{definition}
\theoremstyle{remark}
\newtheorem{remark}{\bf Remark}
\renewcommand{\(}{\left(}
\renewcommand{\)}{\right)}
\newcommand{\ent}{\mathrm{Ent}} 
\newcommand{\HLS}{Hardy-Littlewood-Sobolev}
\newcommand{\G}{\mathcal G}
\newcommand{\F}{\mathcal F}
\newcommand{\J}{\mathsf J}
\newcommand{\Y}{\mathsf Y}
\def\R{\mathbb R}
\def\al{\alpha}
\def\om{\omega}
\def\be{\beta}
\def\ga{\gamma}
\def\le{\leq}
\def\ge{\geq}
\def\De{\Delta} 
\def\Gam{\Gamma}
\def\si{\sigma}
\def\lam{\lambda}
\def\ep{\epsilon}
\def\na{\nabla}
\def\pa{\partial}
\def\la{\left\langle} 
\def\ra{\right\rangle} 
\def\lt{\left}
\def\rt{\right}
\def\mS{\mathcal{S}}
\def\mF{\mathcal{F}}
\def\mG{\mathcal{G}}
\def\dWs{\dot{W}^s(\R^n)}
\numberwithin{equation}{section}
\title{Fractional Sobolev and Hardy-Littlewood-Sobolev inequalities}
\author[*]{Gaspard Jankowiak}
\affil[*]{CNRS -- \textsc{Ceremade, UMR7534} -- Universit\'e Paris-Dauphine}
\author[**]{Van Hoang Nguyen}
\affil[**]{School of Mathematical Sciences -- Tel Aviv University}
\begin{document}
\maketitle

\begin{abstract}
    This work focuses on an improved fractional Sobolev inequality with a remainder term involving the \HLS{} inequality which has been proved recently. By extending a recent result on the standard Laplacian to the fractional case, we offer a new, simpler proof and provide new estimates on the best constant involved. Using endpoint differentiation, we also obtain an improved version of a Moser-Trudinger-Onofri type inequality on the sphere. As an immediate consequence, we derive an improved version of the Onofri inequality on the Euclidean space using the stereographic projection.

\vfill
\noindent\rule{\linewidth}{0.4pt}
\par\noindent \textsuperscript{*}\texttt{jankowiak@ceremade.dauphine.fr}\hfill\textsuperscript{**}\texttt{vanhoang0610@yahoo.com}
\par\noindent ~\url{http://gaspard.janko.fr/}
\begin{center}
\textsuperscript{*}
Place de Lattre de Tassigny, 75016 Paris, \textsc{France}
\\
\textsuperscript{**}
School of Mathematical Sciences, Tel Aviv University, Tel Aviv 69978, \textsc{Israel}
\end{center}
\par\noindent \emph{Keywords}: Fractional Sobolev inequality, Hardy-Littlewood-Sobolev inequality, best constant, stereographic projection, nonlinear diffusion, pseudodifferential operators
\par\noindent \emph{MSC 2010}: 26D10, 35K55, 46E35, 47G30
\end{abstract}

\clearpage

\section{Introduction}
\label{sec:intro}

\par The sharp Sobolev inequality and the \HLS{} inequality are \emph{dual} inequalities. This has been brought to light first by Lieb~\cite{Lieb83} using the \emph{Legendre transform}. Later, Carlen, Carrillo, and Loss~\cite{CCL10} showed that the \HLS{} inequality can also be related to a particular Gagliardo-Nirenberg interpolation inequality via a fast diffusion equation. Since the sharp Sobolev inequality is in fact an endpoint in a familly of sharp Gargliardo-Nirenberg inequalities~\cite{DD02}, this eventually led to Dolbeault~\cite{Dol11} pointing out that a Yamabe type flow is related with the duality between the sharp Sobolev inequality, and the \HLS{} inequality. Still relying on that flow, he proved an enhanced Sobolev inequality, with a remainder term involving the \HLS{} inequality and also provided an estimate on the best multiplicative constant. This was soon extended to the setting of the fractional Laplacian operator by Jin and Xiong~\cite{JinXio11}. This approach heavily relies on the use of the fast diffusion equation, which introduces technical restrictions on the dimension or the exponent of the Laplacian operator. A simpler proof is provided in \cite{Dol-Jan14}, which lifts some of these restrictions, and provides better estimates on the best constant.

\par Let us now go into more details.  The sharp fractional Sobolev inequality states (see \emph{e.g.}~\cite{Swa92,CotTav04,Lieb83}) that
\begin{equation}\label{eq:fSI}
\lt(\int_{\R^n} |u(x)|^q dx\rt)^{\frac2q} \leq S_{n,s}\|u\|_s^2\qquad \text{for all } u\in \dot{W}^s(\R^n),
\end{equation}
where $0< s < \frac n2$, $q = \frac{2n}{n-2s}$, and the best constant $S_{n,s}$ is given by
\begin{equation}\label{eq:sharpSc}
    S_{n,s} = \frac{\Gam\left(\frac {n-2s}2\right)}{2^{2s}\pi^s\,\Gam\left(\frac{n+2s}2\right)}\lt(\frac{\Gam(n)}{\Gam\left( \frac n2 \right)}\rt)^{\frac{2s}n}.
\end{equation}
Moreover, equality in~\eqref{eq:fSI} holds if and only if $u(x) = c\, u_*\left(\frac{x-x_0}t\right)$ for some $c\in \R$, $t >0$, $x_0\in \R^n$ and where
\[
    u_*(x) = (1+ |x|^2)^{-(\frac n2-s)}
\]
is an Aubin-Talenti type extremal function.

The best constant $S_{n,s}$ has been computed first in the special cases $s=1$ and $n=3$ by Rosen \cite{Ros71}, and later for $s=1$ and $n\ge3$ by Aubin \cite{Aub76} and Talenti \cite{Tal76} independently. For general $0< s < \frac n2$, this best constant has been given by Lieb \cite{Lieb83} by computing the sharp constant in the sharp Hardy-Littlewood-Sobolev inequality,
\begin{equation}\label{eq:sHLSI}
    \lt|\iint_{\R^n\times\R^n} \frac{f(x)f(y)}{|x-y|^\lam}dxdy\rt| \leq \pi^{\frac \lam2}\frac{\Gam \left(\frac{n-\lam}2\right)}{\Gam \left(n-\frac\lam2\right)}\lt(\frac{\Gam(n)}{\Gam\left(\frac n2\right)}\rt)^{1-\frac\lam n} \|f\|_{L^p(\R^n)}^2\,,
\end{equation}
where $0 < \lam < n$ and $p = \frac{2n}{2n-\lam}$. There is equality in~\eqref{eq:sHLSI} if and only if $f(x) = c\, H_\lam(\frac{x-x_0}t)$ where
\[H_\lam(x) = (1+ |x|^2)^{-(n-\frac{\lam}2)},\]
with $c\in \R$, $t > 0$, and $x_0\in \R^n$.
For $0<s<\frac n2$, $2^{-2s}\pi^{-\frac n2}\frac{\Gamma( (n-2s)/2)}{\Gamma(s)}\frac{1}{|x|^{n-2s}}$
is the Green's function of $(-\De)^s$, so that the inequality~\eqref{eq:sHLSI} can be rewritten in the following equivalent form, by taking $\lambda = n-2s$
\begin{equation}\label{eq:sHLSIe}
    \lt|\int_{\R^n} f \, (-\De)^{-s}(f)\; dx\rt|\leq S_{n,s} \|f\|_{L^p(\R^n)}^2\,.
\end{equation}
The sharp Hardy-Littlewood-Sobolev inequality was first proved by Lieb based on a rearrangement argument (see \cite{Lieb83}). Recently, Frank and Lieb (see \cite{Frank-Lieb12}) have given a new and rearrangement-free proof of this inequality. Their method was also used to prove the sharp Hardy-Littlewood-Sobolev inequality in the Heisenberg group (see \cite{FL12}). See also \cite{CCL10,FL10} for the other rearrangement-free proofs for some special cases of the sharp Hardy-Littlewood-Sobolev inequality.

Using duality, Jin and Xiong state in \cite[Theorem 1.4]{JinXio11} that when $0 < s < 1$, $n\geq 2$, and $n > 4\,s$, there exists a constant $C_{n,s}$ such that the following inequality
\begin{multline}\label{eq:Jin-Xiong}
     S_{n,s}\|u^r\|_{L^{\frac{2n}{n+2s}}(\R^n)}^2 - \int_{\R^n} u^r \, (-\De)^{-s}u^r dx
     \\
     \leq C_{n,s}\|u\|_{L^{\frac{2d}{d-2s}}(\R^n)}^{\frac{8s}{n-2s}}\lt[S_{n,s} \|u\|_s^2 -\|u\|_{L^{\frac{2n}{n-2s}}(\R^n)}^2\rt],
\end{multline}
holds for any positive $u \in \dot W^s(\R^n)$, where~$r = \frac{n+2s}{n-2s}$. Moreover, the best value $C_{n,s}^*$ for the constant $C_{n,s}$ is such that $C_{n,s}^* \le \frac{n+2s}{n}\left( 1-e^{-\frac{n}{2s}} \right)S_{n,s}$. This adapts to the fractional setting the original result of Dolbeault \cite[Theorem 1.2]{Dol11} which was restricted to the case $s=1$.

In \eqref{eq:Jin-Xiong}, the left-hand side is positive by the \HLS{} inequality \eqref{eq:sHLSIe}, and the right-hand side is positive by Sobolev inequality \eqref{eq:fSI}, so this is an improvement of the Sobolev inequality.

The strong condition on the dimension required for \eqref{eq:Jin-Xiong} stems from the heavy reliance on a fast diffusion flow to achieve these results. Although the constraint on $n$ can be removed by lifting the flow to the sphere, Dolbeault and Jankowiak propose in \cite{Dol-Jan14} a new, simpler proof that brings a number of benefits in the case $s=1$: the role of duality is made more explicit, and it holds for any $n \ge 3$.

The aim of this paper is to extend and unify these results in the fractional setting.
We provide a better estimate on the best constant and by taking limits in $s$, we also derive an improved Moser-Trudinger-Onofri inequality, and recover the Onofri inequality for $n=2$. Our paper is organized as follows: in Section \ref{sec:results} we detail our results, both in the Sobolev (Theorem~\ref{maintheorem}) and Moser-Trudinger-Onofri (Theorem~\ref{improveMT}) settings. Sections \ref{sec:square} and \ref{sec:linearization} are dedicated to the proof of our main theorem using a completion of the square and linearization techniques, respectively.
Next we provide a proof of Theorem~\ref{improveMT} in Section \ref{sec:moser-trudinger}, by taking the limit $s\to \frac n2$.
Finally, in Section \ref{sec:flow}, we complete the proof of Theorem~\ref{maintheorem} using a fractional nonlinear diffusion flow.

\clearpage
\section{Results}
\label{sec:results}

Let us first introduce notation. First recall the definition of the homogeneous Sobolev space $\dot{W}^s(\R^n)$ with $s \in \R$. A Borel function $u\colon \R^n\to \R$ is said to vanish at the infinity if the Lebesgue measure of $\{x\in\R^n\, :\, |u(x)|>t\}$ is finite for all~$t>0$. For $s \in \R$, we define the fractional Laplace operator $(-\De)^s u$ by the distributional function whose Fourier transform is $|\xi|^{2s} \hat{u}(\xi)$, where $\hat{u}$ is the Fourier transform of $u$. For a test function $u$ in the Schwartz space $\mathcal{S}(\R^n)$, $\hat{u}$ is defined as $\hat{u}(\xi) = \int_{\R^n} e^{-i \la x,\xi\ra} u(x) dx$.
From the Plancherel-Parseval identity, we have $\|u\|_{L^2(\R^n)} = (2\pi)^{-\frac n2}\|\hat{u}\|_{L^2(\R^n)}$. We know that the Fourier transform is extended to a bijection from the space of the tempered distributions to itself. Then $\dot{W}^s(\R^n)$ is defined to be the space of all tempered distributions $u$ which vanishes at the infinity and~$(-\De)^{\frac s2}u \in L^2(\R^n)$. For $u\in \dot{W}^s(\R^n)$, we define
\[
    \|u\|_s^2 := \|(-\De)^{\frac s2} u\|_{L^2(\R^n)}^2 = \frac{1}{(2\pi)^n} \int_{\R^n} |\xi|^{2s} |\hat{u}(\xi)|^2\;d\xi
    = \int_{\R^n} u(x)\,(-\De)^s u(x) dx\,.
\]
With these notations, our main result is the following
\begin{theorem}\label{maintheorem}
Let $n\geq 2$, $0 < s < \frac n2$, and denote $r = \frac{n+2s}{n-2s}$
    \begin{enumerate}[(i)]
    \item There exists a positive constant $C_{n,s}$ for which the following inequality
        \begin{multline}\label{eq:mainresult}
            S_{n,s}\|u^r\|_{L^{\frac{2n}{n+2s}}(\R^n)}^2 - \int_{\R^n} u^r \,(-\De)^{-s}u^r dx \\
            \leq C_{n,s}\|u\|_{L^{\frac{2n}{n-2s}}(\R^n)}^{\frac{8s}{n-2s}}\lt(S_{n,s}\|u\|_s^2 - \|u\|_{L^{\frac{2n}{n-2s}}(\R^n)}^2\rt)
        \end{multline}
holds for any positive $u\in \dot{W}^s(\R^n)$.
    \item Let $C_{n,s}^*$ be the best constant in \eqref{eq:mainresult}. It is such that
        \begin{equation}\label{eq:bestconstant}
            \frac{n-2s+2}{n+2s+2} \, S_{n,s} \le C_{n,s}^*\le S_{n,s}\,.
        \end{equation}
        Additionally, in the case $0<s<1$ we know that:
        \begin{equation}\label{eq:bestconstant_upper}
            C_{n,s}^* < S_{n,s}\,.
        \end{equation}
    \end{enumerate}
\end{theorem}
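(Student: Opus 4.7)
The plan is to establish (i) together with the upper bound $C_{n,s}^* \le S_{n,s}$ in one move, using a fractional ``completion of the square''; then to obtain the lower bound on $C_{n,s}^*$ by linearizing around the Aubin-Talenti profile; and finally to sharpen the upper bound to a strict inequality when $0<s<1$ using the fractional nonlinear diffusion flow announced for Section~\ref{sec:flow}.

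For the first step, set $v = (-\De)^{-s}(u^r)$, so that $(-\De)^{s}v = u^r$, and observe the elementary identity
\[
0\le \|(-\De)^{s/2}(u-\lambda v)\|_{L^2(\R^n)}^2
= \|u\|_s^{2} - 2\lambda \int_{\R^n} u^{r+1}\,dx + \lambda^2 \int_{\R^n} u^r\,(-\De)^{-s}u^r\,dx
\]
valid for every $\lambda\in\R$. Since $r+1 = \tfrac{2n}{n-2s} = q$, we have $\int u^{r+1}\,dx = \|u\|_{L^q}^{q}$. Minimizing the right-hand side over $\lambda$ yields
\[
\int_{\R^n} u^r\,(-\De)^{-s}u^r\,dx \ \ge\ \frac{\|u\|_{L^q}^{2q}}{\|u\|_s^2}.
\]
Combining this with the identity $\|u^r\|_{L^{2n/(n+2s)}}^2 = \|u\|_{L^q}^{2r}$ and the exponent relation $2q-2r=2$, then using the raw Sobolev bound $\|u\|_{L^q}^{2}\le S_{n,s}\|u\|_s^{2}$ to control $\|u\|_{L^q}^{2}/\|u\|_s^{2}$ by $S_{n,s}$, one recovers \eqref{eq:mainresult} with $C_{n,s}=S_{n,s}$. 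This simultaneously proves (i) and the right half of \eqref{eq:bestconstant}.

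For the lower bound in (ii), I would linearize around the extremal $u_\ast(x)=(1+|x|^2)^{-(n-2s)/2}$. After reducing to the sphere $\bS^n$ via stereographic projection (which turns $(-\De)^s$ into an intertwining pseudodifferential operator diagonalized by spherical harmonics with eigenvalues $\lambda_k = \Gamma(k+\tfrac n2+s)/\Gamma(k+\tfrac n2-s)$), write $u_\varepsilon = u_\ast + \varepsilon\,\phi$ with $\phi$ orthogonal to the modes $k=0,1$ that are absorbed by the dilation and translation invariances of both inequalities. Both deficits vanish at order $O(\varepsilon)$ and their $O(\varepsilon^2)$ coefficients are expressible in terms of the $\lambda_k$. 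Taking $\phi$ a degree-$2$ spherical harmonic (the first mode not killed by invariance), the ratio
\[
\frac{S_{n,s}\|u_\varepsilon^r\|_{L^{2n/(n+2s)}}^2-\int u_\varepsilon^r(-\De)^{-s}u_\varepsilon^r\,dx}{\|u_\varepsilon\|_{L^q}^{8s/(n-2s)}\bigl(S_{n,s}\|u_\varepsilon\|_s^2-\|u_\varepsilon\|_{L^q}^2\bigr)}
\]
tends as $\varepsilon\to 0$ to a specific combination of $\lambda_1,\lambda_2$ which, after using the quotient $\lambda_1/\lambda_2 = (n/2+1-s)/(n/2+1+s)$, simplifies to $\tfrac{n-2s+2}{n+2s+2}\,S_{n,s}$. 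This forces $C_{n,s}^{*}$ to be at least this value.

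Finally, for the strict inequality \eqref{eq:bestconstant_upper} in the range $0<s<1$, I would use a fractional fast-diffusion flow $\partial_t u = -(-\De)^s(u^m)$ with $m$ chosen so that both $\|u\|_{L^q}$ and the Sobolev deficit decay controllably. Running the flow starting from an arbitrary $u$, comparing the entropy-production rate for the Sobolev deficit with that for the HLS deficit, and plugging the estimate from (i) with $C_{n,s}=S_{n,s}$ into a Gronwall-type argument should produce a strictly better multiplicative constant than $S_{n,s}$ because the completion-of-the-square bound is saturated only in the trivial equality case. The main obstacle I anticipate is the last step: one needs enough regularity, positivity preservation and integrability for the fractional flow to justify the chain rule computations and the integration by parts for $(-\De)^s$ at the nonlinear level, which is precisely why the argument is confined to $0<s<1$ (where the Caffarelli–Silvestre extension is available). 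The linearization step is essentially algebra once the spherical eigenvalues are in hand, and the completion-of-the-square step is purely formal.
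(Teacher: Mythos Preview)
Your plan is correct and matches the paper's approach in all three parts: the completion-of-the-square for (i) and the upper bound, linearization on the sphere via stereographic projection and the spherical-harmonic eigenvalues $\lambda_k=\Gamma(k+\tfrac n2+s)/\Gamma(k+\tfrac n2-s)$ for the lower bound, and the fractional fast-diffusion flow for the strict inequality when $0<s<1$. The only cosmetic difference is in the first step: you optimize over $\lambda$ to get $\int u^r(-\Delta)^{-s}u^r\ge \|u\|_{L^q}^{2q}/\|u\|_s^2$ and then invoke Sobolev, whereas the paper picks $\lambda=S_{n,s}\|u\|_{L^q}^{4s/(n-2s)}$ from the start and reads off \eqref{eq:mainresult} directly without a separate appeal to Sobolev---both routes are equivalent rearrangements of the same square.
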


Theorem~\ref{maintheorem} contains both the result of Dolbeault and Jankowiak \cite[Theorem 1]{Dol-Jan14} in the case $n\geq 3$ and $s = 1$ and the one of Jin and Xiong \cite[Theorem $4.1$]{JinXio11} in the case $s\in(0,1)$, $n \ge 2$ and $n > 4s$ for positive $u$.
The proof of Jin and Xiong is based on a fractional fast diffusion flow and some estimates on the extinction profiles. They also provide the upper bound $C_{n,s}^* \le \frac{n+2s}{n}(1 - e^{-\frac{n}{2s}})$, a bound which is larger that $1$ when $n > 4s$, so that Theorem~\ref{maintheorem} not only extends the result of Jin and Xiong to all $n\geq 2$ and $s\in (0,\frac{n}2)$, but also improve the constant $C_{n,s}$ on the right-hand side of~\eqref{eq:mainresult}.

Before continuing, we introduce the logarithmic derivative of the Euler Gamma function~$\Psi(a)= (\log \Gam(a))'$ for $a>0$,
and also define $\mathcal{H}_k$, the space spanned by $k$-homogeneous harmonic polynomials on $\R^{n+1}$ restricted to $S^n$.
In the following, $d\sigma$ denotes the normalized surface area measure on $S^n$ induced by the Lebesgue measure on $\mathbb R^{n+1}$.

In the spirit of \cite{Bec93,CL92}, we consider the limit $s \to \frac n2$ and obtain an inequality between the functionals associated with the \emph{Moser-Trudinger-Onofri} and the \emph{logarithmic} \HLS{} inequalities. Details will be given below, but let us first state our result.

\begin{theorem}\label{improveMT}
There exists a positive constant $C_n$ such that for any real-valued function $F$ defined on $S^n$ with an expansion on spherical harmonics $F =\sum_{k\geq 0} F_k$ where $F_k \in \mathcal{H}_k$, then the following inequality holds:
\begin{align}\label{eq:improveMT}
C_n\lt(\int_{S^n} e^F d\si\rt)^2 &\lt[\frac1{2n}\sum_{k\geq 1} \frac{\Gam(k+n)}{\Gam(n)\Gam(k)} \int_{S^n} |F_k|^2 d\si +\int_{S^n} F d\si - \log\lt(\int_{S^n} e^F d\si\rt)\rt]\notag\\
&\geq n\iint_{S^n \times S^n} e^{F(\xi)} \, \log |\xi -\eta| \, e^{F(\eta)} d\si(\xi) d\si(\eta)\notag\\
&\quad + \lt(\int_{S^n} e^F d\si\rt)^2\lt[\frac n2\lt(\Psi(n) -\Psi \left(\frac n2\right) -\log 4\rt) + \frac{\ent_\sigma(e^F)}{\int_{S^n} e^{F} d\si}\rt]\,,
\end{align}
where $\ent_\sigma(f) = \int_{S^n} f \log f\; d\sigma - (\int_{S^n} f\; d\sigma) \log (\int_{S^n} f\; d\sigma)$.
Moreover, if $C_n^*$ denotes the best constant for which the above inequality holds, then
\begin{equation}\label{eq:bestconstantMT}
\frac1{n+1} \leq C_n^* \leq 1.
\end{equation}
\end{theorem}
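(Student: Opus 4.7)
The natural approach is endpoint differentiation at $s=\frac n2$, in the spirit of Beckner~\cite{Bec93,CL92}: transfer Theorem~\ref{maintheorem} to the sphere $S^n$ via stereographic projection and pass to the limit $s\to\frac n2$. Along that limit $q=\frac{2n}{n-2s}\to\infty$, and the concentration family $v=e^{F/q}=1+\frac Fq+O(1/q^2)$ on $S^n$ produces $v^q=e^F$ in the limit, converting $L^q$ quantities into exponentials. The bounds on $C_n^*$ in~\eqref{eq:bestconstantMT} should then be inherited directly from the bounds~\eqref{eq:bestconstant} in Theorem~\ref{maintheorem}, since the $S_{n,s}$ normalization common to both sides of~\eqref{eq:mainresult} drops out of the dimensionless ratio as $s\to\frac n2$.

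The first step is to rewrite~\eqref{eq:mainresult} on the sphere. Under stereographic projection, $(-\De)^s$ on $\R^n$ conjugates (through a conformal factor) to the intertwining operator $\mathcal A_s$ on $S^n$, whose eigenvalues on $\mathcal{H}_k$ are $\lam_k(s)=\Gam(k+\tfrac n2+s)/\Gam(k+\tfrac n2-s)$. The sharp Sobolev inequality then reads $\la v,\mathcal A_s v\ra_{L^2(S^n)}\ge\lam_0(s)\|v\|_{L^q(S^n)}^2$ and the HLS inequality reads analogously with $\mathcal A_s^{-1}$, so~\eqref{eq:mainresult} takes the equivalent form $K_s(v)\le C_{n,s}J_s(v)$ for positive $v\colon S^n\to\R$, with $J_s$ and $K_s$ the Sobolev and HLS deficits on the sphere (both vanishing at constants, where $v\equiv 1$ realizes equality in both inequalities).

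Next I would substitute $v=e^{F/q}$ and expand both sides in powers of $\ep:=\frac n2-s$. Each ingredient ($L^q$ and $L^p$ norms, $\la v,\mathcal A_s v\ra$, $\la v^r,\mathcal A_s^{-1}v^r\ra$) has an expansion whose $O(1)$ terms cancel precisely because $v\equiv 1$ is an equality case on both sides. At order $\ep^2$, the Sobolev side yields the Beckner quadratic form $\frac1{2n}\sum_{k\ge 1}\frac{\Gam(k+n)}{\Gam(n)\Gam(k)}\|F_k\|_{L^2(S^n)}^2$ together with the Jensen gap $\int F\,d\si-\log\int e^F\,d\si$ (the latter coming from the mismatch between $\|v\|_{L^q}^2=(\int v^q)^{2/q}$ and the projection of $v$ onto $\mathcal{H}_0$). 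On the HLS side, the $k\ge 1$ modes of $\mathcal A_s^{-1}$ assemble, via the Funk--Hecke formula, into the kernel $\log|\xi-\eta|$; the singular $k=0$ mode---where $\Gam(\frac n2-s)\sim\ep^{-1}$---combines with the matching $\ep^{-1}$ singularity of $S_{n,s}$ to yield the finite entropy term $\ent_\si(e^F)$ and the constant $\tfrac n2(\Psi(n)-\Psi(\tfrac n2)-\log 4)$, through the identity $\frac d{ds}\log\lam_k(s)=\Psi(k+\tfrac n2+s)+\Psi(k+\tfrac n2-s)$ evaluated at $s=\frac n2$.

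Dividing by $\ep^2\bigl(\int e^F d\si\bigr)^2$ and sending $\ep\to 0$, both sides have a finite limit and produce~\eqref{eq:improveMT}, with $C_n$ the limit of the dimensionless ratio $C_{n,s}/S_{n,s}$. The bounds~\eqref{eq:bestconstantMT} follow immediately from~\eqref{eq:bestconstant}: passing $s\to\frac n2$ in $\frac{n-2s+2}{n+2s+2}\le C_{n,s}^*/S_{n,s}\le 1$ yields $\frac 1{n+1}\le C_n^*\le 1$. The main obstacle is this asymptotic bookkeeping: both sides of the sphere inequality carry $\ep^{-1}$ singularities that must cancel with matching coefficients so that the $O(\ep^2)$ coefficients are finite and recognizable as the Moser-Trudinger-Onofri and logarithmic HLS deficits with exactly the numerical prefactors of~\eqref{eq:improveMT}. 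Careful use of $\Gam(z)=z^{-1}-\gamma+O(z)$ near $z=0$, together with the Funk--Hecke identification of $-\log|\xi-\eta|$ as the generating kernel of the sequence $\{\lam_k(\tfrac n2)^{-1}\}_{k\ge 1}$ on $S^n$, is the technical core of the argument.
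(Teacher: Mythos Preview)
Your overall strategy---endpoint differentiation of Theorem~\ref{maintheorem} at $s=\tfrac n2$ after transferring to the sphere---is exactly what the paper does. The paper uses the ansatz $v=1+tF$ on $S^n$ with $t=\tfrac{n-2s}{2n}=\tfrac1q$ (see~\eqref{eq:ufunc}) rather than your $v=e^{F/q}$; since these agree to the relevant order the limits coincide, and the existence of~\eqref{eq:improveMT} with $C_n=1$ (hence $C_n^*\le 1$) follows as you describe.

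The gap is in your argument for $C_n^*\ge\tfrac1{n+1}$. You claim this follows by passing $s\to\tfrac n2$ in~\eqref{eq:bestconstant}, but best constants do not pass to limits in that direction. The bound $C_{n,s}^*\ge\tfrac{n-2s+2}{n+2s+2}\,S_{n,s}$ only guarantees the existence of functions $u_s\in\dot W^s(\R^n)$ with large fractional ratio; nothing forces these $u_s$ to arise from a \emph{fixed} $F$ via your ansatz, so you cannot simply send $s\to\tfrac n2$ in the test function. What pointwise convergence of the functionals \emph{does} give is $C_n^*\le\limsup_s C_{n,s}^*/S_{n,s}$, which is the upper-bound direction only. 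The paper therefore does not try to inherit the lower bound from~\eqref{eq:bestconstant}; instead it linearizes~\eqref{eq:improveMT} directly at $F\equiv0$, applying the inequality to $\epsilon F$ with $\int_{S^n}F\,d\si=0$, expanding both sides to order $\epsilon^2$, and using the Funk--Hecke expansion of $\log|\xi-\eta|$. This yields
\[
C_n^*\ \ge\ \sup_{k\ge2}\ \frac{1-\Gam(n{+}1)\Gam(k)/\Gam(n{+}k)}{\Gam(n{+}k)/\bigl(\Gam(n{+}1)\Gam(k)\bigr)-1}\ =\ \frac1{n+1}\,,
\]
with the supremum attained at $k=2$. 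Your route can be salvaged by noting that the perturbations realizing the lower bound in Section~\ref{sec:linearization} correspond, under stereographic projection, to $F\in\mathcal H_2$ and survive the limit $s\to\tfrac n2$---but that is effectively redoing the same linearization.
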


The inequality~\eqref{eq:bestconstantMT} is proved in the same way as item \emph{(ii)} in Theorem~\ref{maintheorem}. We will expand both sides of the inequality~\eqref{eq:improveMT} around the function $F\equiv 0$ which is an optimal function for the Moser-Trudinger-Onofri inequality~\eqref{eq:MTSphere}.

A direct consequence of Theorem~\ref{improveMT} written for $n=2$ is an improved version of the Euclidean Onofri inequality with a remainder term involving the two dimensional logarithmic Hardy-Littlewood-Sobolev inequality. We will use the following notation
\[d\mu(x) = \mu(x) dx,\qquad \mu(x) = \frac1{\pi\, (1+|x|^2)^2}, \qquad x\in \R^2.\]
\begin{corollary}\label{improveOnofri}
There exists a positive constant $C_2$ such that for any $f \in L^1(\mu)$ and $\na f \in L^2(\R^2)$, the following inequality holds:
\begin{align}\label{eq:improveOnofri}
C_2\lt(\int_{\R^2} e^f d\mu\rt)^2 &\lt[\frac1{16\pi} \|\na f\|_{L^2(\R^2}^2 + \int_{\R^2} f d\mu -\log\lt(\int_{\R^2} e^f d\mu\rt)\rt]\notag\\
&\geq \lt(\int_{\R^2} e^f d\mu\rt)^2\lt(1 + \log\pi + \int_{\R^2}\frac {e^f \mu}{\int_{\R^2} e^f d\mu} \log\lt(\frac{e^f \mu}{\int_{\R^2} e^fd\mu}\rt) dx \rt)\notag\\
&\quad -4\pi \int_{\R^2} e^{f(x)} \mu(x) \, (-\De)^{-1} (e^f \mu) (x)\, dx.
\end{align}
Moreover, if $C_2^*$ denotes the best constant for which the inequality~\eqref{eq:improveOnofri} holds, then
\[\frac13 \leq C_2^* \leq 1.\]
\end{corollary}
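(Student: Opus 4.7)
The plan is to deduce Corollary~\ref{improveOnofri} from Theorem~\ref{improveMT} at $n=2$ by pulling every quantity on $S^2$ back to $\R^2$ via the stereographic projection $\Pi:S^2\to\R^2\cup\{\infty\}$, and then carefully matching terms. Setting $F(\xi)=f(x)$ whenever $\xi=\Pi^{-1}(x)$, the normalized surface measure on $S^2$ pushes forward to $d\mu(x)=\pi^{-1}(1+|x|^2)^{-2}dx$, so $\int_{S^2} e^F d\si$ and $\int_{S^2} F d\si$ become $\int_{\R^2} e^f d\mu$ and $\int_{\R^2} f d\mu$. The Corollary's function-space assumption $f\in L^1(\mu)$, $\nabla f\in L^2(\R^2)$ is precisely what is needed for $F$ to belong to the natural $H^1(S^2)$ setting of Theorem~\ref{improveMT}.

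For the Dirichlet-type term in \eqref{eq:improveMT}, the eigenvalues of $-\De_{S^2}$ on $\mathcal{H}_k$ are $k(k+1)=\frac{\Gam(k+2)}{\Gam(2)\Gam(k)}$, so Parseval gives $\sum_{k\ge 1}k(k+1)\int_{S^2}|F_k|^2 d\si=\int_{S^2}|\nabla_{S^2}F|^2 d\si$. Conformal invariance of the two-dimensional Dirichlet energy yields $\int_{S^2}|\nabla_{S^2}F|^2 d\si=\frac{1}{4\pi}\|\nabla f\|_{L^2(\R^2)}^2$, and combining with the prefactor $1/(2n)=1/4$ produces the $\frac{1}{16\pi}\|\nabla f\|^2$ term in \eqref{eq:improveOnofri}.

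For the log-HLS term I would use the chord-length formula $|\xi-\eta|^2=4|x-y|^2/[(1+|x|^2)(1+|y|^2)]$, which splits the logarithm into four pieces. The pure $\log|x-y|$ contribution becomes, via the 2D Green's identity $(-\De)^{-1}g(x)=-\frac{1}{2\pi}\int_{\R^2}\log|x-y|\,g(y)dy$, exactly $-4\pi\int e^f\mu\,(-\De)^{-1}(e^f\mu)dx$; the $\log 2$ piece gives $2\log 2\,(\int e^f d\mu)^2$; and the two $(1+|\cdot|^2)$ pieces each couple the total mass with $\int\log(1+|x|^2)e^f d\mu$. For the entropy I write $\ent_\si(e^F)=\int f e^f d\mu-(\int e^f d\mu)\log(\int e^f d\mu)$ and, using $\log\mu(x)=-\log\pi-2\log(1+|x|^2)$, express
\[
\frac{\ent_\si(e^F)}{\int e^F d\si}=\int\frac{e^f\mu}{\int e^f d\mu}\log\frac{e^f\mu}{\int e^f d\mu}\,dx+\log\pi+\frac{2\int\log(1+|x|^2)e^f d\mu}{\int e^f d\mu}.
\]
The two $\log(1+|x|^2)$ cross-terms cancel exactly between the HLS expansion and the entropy rewrite. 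The remaining constants combine using $\Psi(2)-\Psi(1)-\log 4=1-2\log 2$ (from $\Psi(1)=-\gamma$, $\Psi(2)=1-\gamma$); adding the $2\log 2$ leftover from the chord formula and the $\log\pi$ from the entropy rewrite yields exactly the constant $1+\log\pi$ in \eqref{eq:improveOnofri}. The best-constant bounds $\frac13\le C_2^*\le 1$ are inherited verbatim from \eqref{eq:bestconstantMT} at $n=2$.

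The only genuine obstacle is the bookkeeping: several cross-terms of the form $(\int e^f d\mu)\int\log(1+|x|^2)e^f d\mu$ appear simultaneously from the chord expansion and from the entropy rewrite, and one must check carefully that their signs match and that the residual constants from the digamma and from $\log 2$ combine into $1+\log\pi$ rather than some other value.
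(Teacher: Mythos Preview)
Your proposal is correct and follows exactly the same route as the paper: deduce Corollary~\ref{improveOnofri} from Theorem~\ref{improveMT} at $n=2$ via stereographic projection, using $\Psi(2)-\Psi(1)=1$ and the conformal identity $\int_{\R^2}|\nabla f|^2\,dx = 4\pi\int_{S^2}|\nabla F|^2\,d\sigma$. The paper only states these two ingredients and declares the rest ``straightforward''; your write-up fills in the bookkeeping (chord-length expansion of $\log|\xi-\eta|$, rewriting of the entropy via $\log\mu=-\log\pi-2\log(1+|x|^2)$, and the cancellation of the $\log(1+|x|^2)$ cross-terms), and your constant tally $1-2\log2+2\log2+\log\pi=1+\log\pi$ is correct.
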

As above, the right-hand side of~\eqref{eq:improveOnofri} is nonnegative by the logarithmic Hardy-Littlewood-Sobolev inequality since Green's function of $-\De$ in $\R^2$ is given by $-\frac{1}{2\pi} \log\lt(|x|\rt)$. The inequality~\eqref{eq:improveOnofri} is a straightforward consequence of~\eqref{eq:improveMT} since $\Psi(2) -\Psi(1) =1$, and the fact that if $f(x) =F(\mS(x))$ with $\mS$ is the stereographic projection from $\R^2$ to $S^2$, then
\[\int_{\R^2} |\na f(x)|^2 dx = 4\pi \int_{S^2} |\na F|^2 d\si.\]
Another proof of Corollary~\ref{improveOnofri} is provided in Theorem $2$ of \cite{Dol-Jan14} by using a completely different method. More precisely, Dolbeault and Jankowiak use the square method to obtain an improved version of the Caffarelli-Kohn-Nirenberg inequalities on the weighted spaces, and then take a limit to get~\eqref{eq:improveOnofri}.

\bigskip

The proof of~\eqref{eq:mainresult} is similar to the one of Dolbeault and Jankowiak \cite{Dol-Jan14} which is based on the duality between the Sobolev and \HLS{} inequalities, in fact a simple expansion of a square integral functional. The first inequality in~\eqref{eq:bestconstant} is proved by expanding both sides of~\eqref{eq:mainresult} around the function $(1 + |x|^2)^{-\frac{n-2s}2}$ which is an extremal function for the fractional Sobolev inequality, and thus is a zero of both the left-hand side and right-hand side. To solve the linearized problem, we recast it to the unit sphere $S^n$ using the stereographic projection, and then identify the minimizers using the Funk-Hecke theorem (see \cite[Sec. $11.4$]{EMOT81}). The Funk-Hecke theorem gives a decomposition of $L^2(S^n)$ into the orthogonal summation of the spaces $\mathcal{H}_l$'s, that is
\begin{equation}\label{eq:decomp}
L^2(S^n) = \bigoplus_{l=0}^\infty\mathcal{H}_l,
\end{equation}
Moreover, the integral operators on $S^n$ whose kernels have the form $K(\la\om,\eta\ra)$ are diagonal with respect to this decomposition and their eigenvalues can be computed explicitly by using the Gegenbauer polynomials (see \cite[Chapter $22$]{AS72}).

By using stereographic projection, we can lift the sharp Hardy-Littlewood-Sobolev inequality~\eqref{eq:sHLSI} to the conformally equivalent setting of the sphere $S^n$ as follows
\begin{align}\label{eq:Sphereversion}
\left|\iint_{S^n\times S^n}  \frac{F(\xi) F(\eta)}{ |\xi-\eta|^{\lam}} d\si(\xi) d\si(\eta)\right| \leq B_\lam\lt(\int_{S^n} |F(\xi)|^p d\si(\xi)\rt)^{\frac 2p},
\end{align}
with
\[B_\lam = 2^{-\lam}\,\frac{\Gam\(\frac{n-\lam}2\)}{\Gam\(n-\frac\lam2\)}\,\frac{\Gam(n)}{\Gam\(\frac n2\)},\quad p =\frac{2n}{2n-\lam},\]
and $d\sigma$ is the normalized surface area measure on $S^n$. Note that the distance $|\cdot|$ is the distance in $\R^{n+1}$, not the geodesic distance on $S^n$. Some geometric and probabilistic informations can be obtained from this inequality through endpoint differentiation arguments (see \cite{Bec93}).
Carlen and Loss, but also Beckner considered the limit case of~\eqref{eq:Sphereversion} when $\lam =0$ while studying the two dimensional limit of the Sobolev interpolation inequality on the sphere, pioneered by Bidaut-V\'eron and V\'eron in \cite[Corollary 6.2]{BidVer91}.
In this limit, they proved the following Moser-Trudinger-Onofri inequality.
For any real valued function $F$ defined on $S^n$ with an expansion $F = \sum_{k\geq 0} F_k$, where $F_k \in \mathcal{H}_k$, the following holds
\begin{equation}\label{eq:MTSphere}
\log\lt(\int_{S^n} e^{F(\xi)} d\si(\xi)\rt) \leq \int_{S^n} F(\xi) d\si(\xi) + \frac1{2n} \sum_{k\geq 1}\frac{\Gam(n+k)}{\Gam(n)\Gam(k)} \int_{S^n} |Y_k(\xi)|^2 d\si(\xi).
\end{equation}
Moreover, equality holds in~\eqref{eq:MTSphere} if and only if
\[F(\xi) = -n\log |1 -\la\xi,\zeta\ra| +C,\]
for some $|\zeta| < 1$ and $C\in \R$.

When $n=2$, the inequality~\eqref{eq:MTSphere} becomes the classical Onofri inequality on $S^2$ (see \cite{Moser71, Onofri82}). Under the stereographic projection, this inequality is equivalent to the following inequality
\begin{equation}\label{eq:MOeu}
\log\lt(\int_{\R^2} e^{g(x)} d\mu(x)\rt) - \int_{\R^2} g(x) d\mu(x) \leq \frac1{16\pi} \int_{\R^2} |\na g(x)|^2 dx
\end{equation}
for any $g\in L^1(\mu)$ and $\na g\in L^2(\R^2)$.

The Onofri inequality~\eqref{eq:MOeu} plays the role of Sobolev inequality in two dimensions, see for example \cite{DEJ14} for a thorough review and justification of this statement. This inequality has several extensions, for instance to higher dimensions, which are out of the scope of this paper.

Just like the dual of the fractional Sobolev inequality is the \HLS{} inequality, the Legendre dual of~\eqref{eq:MTSphere} is the logarithmic \HLS{} inequality, first written in \cite{CL92} and \cite{Bec93}. It states that for nonnegative function $F$ such that $\int_{S^n} F d\si=1$,
\begin{multline}\label{eq:entropy}
-n \iint_{S^n\times S^n} F(\xi)\log |\xi-\eta|\, F(\eta) d\si(\xi) d\si(\eta) \\
\leq \frac n2 \lt(\Psi(n) -\Psi \left(\frac n2\right) -\log 4\rt) + \int_{S^n} F \log F \; d\sigma\,,
\end{multline}
where we recall $\Psi(a)= (\log \Gam(a))'$.
We remark that the appearance of the logarithmic kernel $-2\log|\xi -\eta|$ is quite natural since it is Green's function on $S^2$. We can rewrite inequality~\eqref{eq:entropy} in two dimensions and on the Euclidean space, and get that for any nonnegative function $f\in L^1(\R^2)$ such that $\int_{\R^2}f(x) dx = 1$, with $f\log f$ and~$(1 + \log |x|^2) f$ in $L^1(\R^2)$, we have
\begin{equation}\label{eq:logHLS}
\int_{\R^2}f\log f dx + 2\iint_{\R^2\times \R^2} f(x) \log|x-y|\,f(y)\, dx\, dy + (1+\log \pi) \geq 0.
\end{equation}
This more common version of the logarithmic \HLS{} inequality is the Legendre dual of the Onofri inequality~\eqref{eq:MOeu}. It has already seen a number of applications, \emph{e.g.} in chemotaxis models~\cite{CC08}.

\bigskip
In this paper, we take a step towards unification of the results of~\cite{Dol11, Dol-Jan14, JinXio11}. However, a number of questions remain unanswered.
The restriction $0<s<1$ in~\eqref{eq:bestconstant_upper} comes from the representation of the fractional Laplace operator, is this purely technical? To extend this part of the result to Theorem~\ref{improveMT}, it would make sens to consider a fractional logarithmic diffusion flow. However, this raises difficulties which are already presented in \cite[Proposition 3.4]{Dol11}, so we cannot exclude the case $C_n^* = 1$ yet. Finally, the computation of the exact value of $C_{n,s}^*$ is still open and probably requires new tools.

\label{sec:proofs}

\section{Upper bound on the best constant via an expansion of the square}
\label{sec:square}

In this section, we give a proof of Theorem~\ref{maintheorem} by the completion of the square method.

\begin{proof}[Proof of Theorem~\ref{maintheorem}]
By a density argument, it suffices to prove the inequality~\eqref{eq:mainresult} for any positive smooth function $u$ which belongs to Schwartz space on $\R^n$. For such functions, integration by parts gives us
\[\int_{\R^n} |\na (-\De)^{-\frac{1+s}2}v|^2\; dx = \int_{\R^n} v (-\De)^{-s}v\; dx,\]
and, if $v = u^r$ with $r= \frac{n+2s}{n-2s}$,
\[\int_{\R^n} \na (-\De)^{\frac{s-1}2}u \,\na (-\De)^{-\frac{1+s}2}v\; dx= \int_{\R^n} u(x) v(x)\; dx = \int_{\R^n} u(x)^q\; dx,\]
where $q = \frac{2n}{n-2s}$. Using these equalities, we have
\begin{multline}
0 \le \int_{\R^n}\lt|S_{n,s}\|u\|_{L^q(\R^n)}^{\frac{4s}{n-2s}}\na (-\De)^{\frac{s-1}s}u - \na(-\De)^{-\frac{1+s}2}v\rt|^2\; dx \\
=S_{n,s}^2\|u\|_{L^q(\R^n)}^{\frac{8s}{n-2s}}\|u\|_s^2- 2S_{n,s}\|u\|_{L^q(\R^n)}^{\frac{4s}{n-2s}}\int_{\R^n} u(x)^q\; dx + \int_{\R^n}u^r (-\De)^{-s} u^r\; dx.
\label{eq:square}
\end{multline}
Further, since $q = pr$, we have $\Vert u \Vert_{L^q(\R^n)}^q = \Vert u \Vert_{L^{pr}(\R^n)}^q = \Vert u^r \Vert_{L^{p}(\R^n)}^{p}$.
This shows that
\[\|u\|_{L^q(\R^n)}^{\frac{4s}{n-2s}}\int_{\R^n} u(x)^q\; dx = \Vert u^r \Vert_{L^p(\R^n)}^{p\frac{q-2}{q}} \Vert u^r \Vert_{L^p(\R^n)}^p = \|u^r\|_{L^p(\R^n)}^2.\]
Since the left hand side of~\eqref{eq:square} is nonnegative, it implies
\[
S_{n,s}\|u^r\|_{L^p(\R^n)}^2 - \int_{\R^n}u^r (-\De)^{-s} u^r\; dx
\leq S_{n,s}\|u\|_{L^q(\R^n)}^{\frac{8s}{n-2s}}\lt(S_{n,s}\|u\|_s^2 - \|u\|_{L^q(\R^n)}^2 \rt).
\]
This is exactly~\eqref{eq:mainresult} with $C_{n,s} = S_{n,s}$.
\end{proof}

\section{Lower bound via linearization}
\label{sec:linearization}

Let us start this section by briefly recalling some facts about the stereographic projection from the Euclidean space $\R^n$ to the unit sphere $S^n$. Denote $N =(0,\cdots,0,1)\in \R^{n+1}$ the \emph{north pole} of $S^n$ and consider the map $\mS\colon\R^n\mapsto S^n\setminus \{N\}$ defined by
\[\mS(x) = \lt(\frac{2x}{1+|x|^2}, \frac{|x|^2 -1}{1+|x|^2}\rt)\,,\]
the Jacobian of $\mathcal S$ is then given by
\[J_\mS(x) = \lt(\frac2{1+|x|^2}\rt)^n.\]
If $F$ is an integrable function on $S^n$ then $F(\mS(x))J_\mS(x)\in L^1(\R^n)$ and
\[\int_{\R^n } F(\mS(x))J_\mS(x)\; dx = \int_{S^n} F(\om) d\om,\]
where $d\om$ is the unnormalized surface area measure on $S^n$ induced by the Lebesgue measure on $\R^n$.
The inverse of $\mS$ is given by $\mS^{-1}(\om) = \(\frac{\om_1}{1-\om_{n+1}},\cdots, \frac{\om_n}{1-\om_{n+1}}\)$ with Jacobian $J_{\mS^{-1}}(\om)= (1-\om_{n+1})^{-n}$, where $\om = (\om_1,\om_2,\cdots,\om_{n+1}) \in S^n\setminus \{N\}$. Given $f\in \dWs$ and $q = \frac{2n}{n-2s}$, we define the new function $F$ on $S^n$ by
\begin{equation}\label{eq:fFcau}
F(\om) = f(\mS^{-1}(\om))J_{\mS^{-1}}(\om)^{\frac1q}.
\end{equation}
Then we have
\begin{equation}\label{eq:squarecau}
\int_{\R^n} \frac{f(x)^2}{(1+|x|^2)^{2s}}\; dx = 2^{-2s} \int_{S^n} F(\om)^{2}\,d\om,
\end{equation}
and
\begin{multline}\label{eq:HLScau}
\iint_{\R^n\times\R^n} \frac{f(x)^2}{(1+|x|^2)^{2s}} \,|x-y|^{-n+2s} \, \frac{f(y)^2}{(1+|y|^2)^{2s}}\; dx\; dy\\
 = 2^{-4s}\iint_{S^n\times S^n}F(\om)\, |\om-\eta|^{-n+2s}\, F(\eta) \;d\om\; d\eta.
\end{multline}
Equality~\eqref{eq:HLScau} is derived from the fact that
\[|\mS(x) -\mS(y)|^2 =\frac2{1+|x|^2}\, |x-y|^2\, \frac2{1+|y|^2}.\]

Next, we prove inequality~\eqref{eq:bestconstant}. For this purpose, let us denote $\mF$ and $\mG$ the positive functionals associated with the Sobolev and \HLS{} inequalities, respectively:
\begin{gather*}
\mF[u] = S_{n,s}\|u\|_s^2 -\|u\|_{L^q(\R^n)}^2,\qquad u\in \dWs\,,
\\
\mG[v] = S_{n,s}\|v\|_{L^p(\R^n)}^2 - \int_{\R^n}v (-\De)^{-s}v\; dx, \qquad v\in L^p(\R^n)\,,
\end{gather*}
and recall that $\mF[u_*] = 0$ and $\mG[u_*^r] = 0$. The inequality of Theorem~\ref{maintheorem} thus reads
\[
  C_{n,s}\,\Vert u \Vert_{L^q(\R^n)}^{\frac{8s}{n-2s}}\mF[u] \ge \mG[u^r]\,,
\]
and we are interested in a lower bound for
\[
  C_{n,s}^* = \sup_{u\in \dot W^s}\frac{\mG[u^r]}{\Vert u \Vert_{L^q(\R^n)}^{\frac{8s}{n-2s}}\mF[u]}\,.
\]

Consider now $u = u_* + \epsilon f$ where $f$ is smooth and compactly supported such that
\begin{equation}
  \int_{\R^n} \frac{u_*(x)\, f(x)}{(1+|x|^2)^{2s}}\; dx =0\,.
  \label{eq:ortho_cond_lin}
\end{equation}
By using the fact that $u_*$ is a critical point of $\mF$ and as such solves
\begin{equation}\label{eq:E-Le}
  (-\De)^s u_*(x) = \frac{2^{2s}\Gam\(\frac{n+2s}2\)}{\Gam\(\frac{n-2s}2\)}\, u_*(x)^r
  = \frac{2^{2s}\Gam\(\frac{n+2s}2\)}{\Gam\(\frac{n-2s}2\)}\, \frac{u_*(x)}{(1+|x|^2)^{2s}}\,,
\end{equation}
we in fact have the following.
\begin{proposition} With the above notation and~$f$ satisfying~\eqref{eq:ortho_cond_lin},
\begin{equation}\label{eq:mFsecond}
  \frac{\mF[u_\ep]}{S_{n,s}} = \ep^2\lt(\|f\|_s^2 - \frac{2^{2s}\Gam\(\frac{n+2s+2}2\)}{\Gam\(\frac{n-2s+2}2\)} \int_{\R^n}\frac{f(x)^2}{(1+|x|^2)^{2s}}\; dx\rt) + o(\ep^2).
\end{equation}
\end{proposition}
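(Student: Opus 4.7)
The plan is a direct Taylor expansion of $\mF[u_\ep]$ at $u_*$, exploiting the Euler--Lagrange equation \eqref{eq:E-Le} and the orthogonality condition \eqref{eq:ortho_cond_lin} to kill the linear term. Writing $u_\ep = u_* + \ep f$, one expands separately the fractional Sobolev seminorm and the $L^q$ norm.

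For the seminorm, I would integrate by parts and use \eqref{eq:E-Le} to get
\[
  \int_{\R^n} u_*\,(-\De)^s f\, dx = \frac{2^{2s}\Gam((n+2s)/2)}{\Gam((n-2s)/2)}\int_{\R^n}\frac{u_* f}{(1+|x|^2)^{2s}}\, dx = 0,
\]
so that $\|u_\ep\|_s^2 = \|u_*\|_s^2 + \ep^2\|f\|_s^2$ exactly.

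For the $L^q$ piece, expand $(u_*+\ep f)^q = u_*^q + q\ep\,u_*^{q-1}f + \tfrac{q(q-1)}{2}\ep^2\,u_*^{q-2}f^2 + o(\ep^2)$ pointwise, integrate, and then apply the binomial expansion of $(1+x)^{2/q}$. Two algebraic identities do the heavy lifting: since $u_* = (1+|x|^2)^{-(n-2s)/2}$, a short computation gives $u_*^{q-1} = (1+|x|^2)^{-(n+2s)/2}$ and $u_*^{q-2} = (1+|x|^2)^{-2s}$. The first identity shows that the linear term $\int u_*^{q-1} f\, dx$ equals $\int \frac{u_* f}{(1+|x|^2)^{2s}}\, dx = 0$, and the second identifies the quadratic term as $\int f^2/(1+|x|^2)^{2s}\, dx$. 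Setting $A := \int u_*^q\, dx$, this yields
\[
  \|u_\ep\|_{L^q(\R^n)}^2 = A^{2/q} + (q-1)\,A^{2/q-1}\ep^2\int_{\R^n}\frac{f(x)^2}{(1+|x|^2)^{2s}}\, dx + o(\ep^2).
\]
Combining both expansions with $\mF[u_*]=0$ gives $\mF[u_\ep]/S_{n,s} = \ep^2\bigl(\|f\|_s^2 - \kappa_{n,s}\int f^2/(1+|x|^2)^{2s}\bigr) + o(\ep^2)$ with $\kappa_{n,s} = (q-1)A^{2/q-1}/S_{n,s}$.

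The only remaining task is to check $\kappa_{n,s} = 2^{2s}\Gam((n+2s+2)/2)/\Gam((n-2s+2)/2)$. One evaluates $A = \int_{\R^n}(1+|x|^2)^{-n}dx = \pi^{n/2}\Gam(n/2)/\Gam(n)$ by a standard Beta-function computation, plugs in the explicit value \eqref{eq:sharpSc} of $S_{n,s}$, and observes the cancellation of the $\pi^{\pm s}$ and $(\Gam(n)/\Gam(n/2))^{\pm 2s/n}$ factors, leaving $\kappa_{n,s} = \tfrac{n+2s}{n-2s}\,2^{2s}\,\Gam((n+2s)/2)/\Gam((n-2s)/2)$; the identity $\Gam(a+1) = a\,\Gam(a)$ applied to both Gamma factors rewrites this in the desired form. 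The main bookkeeping obstacle is simply keeping track of the $(1+x)^{2/q}$ binomial coefficients together with the cross term $(\int u_*^{q-1}f)^2$ — but since this cross term vanishes by orthogonality, it never enters the final formula.
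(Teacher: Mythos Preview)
Your proposal is correct and follows essentially the same route as the paper: a second-order Taylor expansion of $\mF[u_\ep]$ at $u_*$, using \eqref{eq:E-Le} and \eqref{eq:ortho_cond_lin} to kill the linear term, and identifying $u_*^{q-2}=(1+|x|^2)^{-2s}$ for the quadratic term. The paper phrases this as computing $\frac{d}{d\ep}$ and $\frac{d^2}{d\ep^2}$ at $\ep=0$ rather than expanding the integrand, and it leaves the evaluation of $(q-1)A^{2/q-1}/S_{n,s}$ implicit, but the content is the same.
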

\begin{proof}
By a direct computation, we have
\[\frac{d}{d\ep}\lt(\mF[u_\ep]\rt)_{\ep =0} =2S_{n,s}\int_{\R^n}f(-\De)^su_*\; dx-2\lt(\int_{\R^n} u_*^q\; dx\rt)^{\frac2q-1}\int_{\R^n}u_*^{q-1} f\; dx = 0,\]
here, we use the fact that $(-\De)^su_*$ and $u_*^{q-1}$ are proportional to $u_*(x) (1+|x|^2)^{-2s}$. Taking the second derivative of $\mF[u_\ep]$ at $\ep = 0$, we obtain
\begin{align*}
\frac{d^2}{d\ep^2}\lt(\mF[u_\ep]\rt)_{\ep =0}& = 2S_{n,s}\|f\|_s^2 - 2(q-1)\lt(\int_{\R^n} u_*^q\; dx\rt)^{\frac2q-1}\int_{\R^n} u_*^{q-2} f^2\; dx\\
&=2S_{n,s}\lt(\|f\|_s^2 - \frac{2^{2s}\Gam\(\frac{n+2s+2}2\)}{\Gam\(\frac{n-2s+2}2\)} \int_{\R^n}\frac{f(x)^2}{(1+|x|^2)^{2s}}\; dx\rt).
\end{align*}
Since $\mF[u_*] =0$, using Taylor's expansion, we get~\eqref{eq:mFsecond}.
\end{proof}

Let us denote
\[\mathrm F[f] = \|f\|_s^2 - \frac{2^{2s}\Gam\(\frac{n+2s+2}2\)}{\Gam\(\frac{n-2s+2}2\)} \int_{\R^n}\frac{f(x)^2}{(1+|x|^2)^{2s}}\; dx.\]

Now, we introduce the new functions
\[f_0(x) = u_*(x),\quad f_i(x) = \frac{2x_i}{1+|x|^2}u_*(x),\, i =1,\cdots ,n,\quad f_{n+1}(x) = \frac{|x|^2-1}{1+|x|^2} u_*(x).\]
We remark that
\[f_i(x) = -\frac{2}{n-2s}\,\pa_{x_i} u_*(x) \qquad i =1,\cdots, n,\]
and
\[f_{n+1}(x) =-\frac{2}{n-2s}\,\pa_\lam\lt(\lam^{-(s-\frac n2)}u_*(\lam x)\rt)_{\lam =1}.\]
Using these relations and~\eqref{eq:E-Le}, we get
\begin{lemma}\label{lemmafi}
The following assertions hold:
\begin{equation}\label{eq:f0}
  (-\De)^s f_0(x) = \frac{2^{2s}\Gam\(\frac{n+2s}2\)}{\Gam\(\frac{n-2s}2\)} \frac{f_0(x)}{(1+|x|^2)^{2s}},
\end{equation}
\begin{equation}\label{eq:fi}
  (-\De)^s f_i(x) = \frac{2^{2s}\Gam\(\frac{n+2s+2}2\)}{\Gam\(\frac{n-2s+2}2\)}\frac{f_i(x)}{(1+|x|^2)^{2s}},\quad i =1,\cdots, n+1.
\end{equation}
\end{lemma}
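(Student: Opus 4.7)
The identity~\eqref{eq:f0} is exactly the Euler--Lagrange equation~\eqref{eq:E-Le}, so nothing new has to be established there. For~\eqref{eq:fi}, my plan is to exploit the fact that $u_*$ sits in a family of extremals parametrized by the conformal group, with $f_i$ ($i=1,\ldots,n$) and $f_{n+1}$ corresponding to the infinitesimal directions of translation and dilation, respectively. The whole computation then reduces to differentiating~\eqref{eq:E-Le} in each of these two parameters and matching constants.

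For $i=1,\ldots,n$, I would use that $(-\De)^s$ commutes with translations (immediate from its Fourier-multiplier definition), hence with $\partial_{x_i}$. Differentiating~\eqref{eq:E-Le} in $x_i$ and applying the chain rule gives
\[
(-\De)^s \partial_{x_i} u_*(x) \;=\; c\,r\, u_*(x)^{r-1}\,\partial_{x_i} u_*(x),\qquad c := \frac{2^{2s}\Gam\(\frac{n+2s}2\)}{\Gam\(\frac{n-2s}2\)}.
\]
Because $(n-2s)(r-1)/2 = 2s$, a direct computation yields $u_*(x)^{r-1} = (1+|x|^2)^{-2s}$, and then the relation $f_i = -\frac{2}{n-2s}\partial_{x_i}u_*$ stated just above the lemma produces~\eqref{eq:fi} for $i \le n$. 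For $i=n+1$, I would use the scaling covariance $(-\De)^s[u(\lambda\,\cdot)](x) = \lambda^{2s}[(-\De)^s u](\lambda x)$ to check that $u_\lambda(x) := \lambda^{(n-2s)/2} u_*(\lambda x)$ still satisfies $(-\De)^s u_\lambda = c\, u_\lambda^r$ for every $\lambda>0$; differentiating at $\lambda=1$ and using $f_{n+1} = -\frac{2}{n-2s}\partial_\lambda u_\lambda|_{\lambda=1}$ then yields the same identity by an identical manipulation. The displayed constant in~\eqref{eq:fi} is finally recognized through the Gamma-function relation
\[
c\,r \;=\; \frac{n+2s}{n-2s}\,c \;=\; \frac{2^{2s}\Gam\(\frac{n+2s+2}2\)}{\Gam\(\frac{n-2s+2}2\)},
\]
which is immediate from $\Gam(a+1)=a\Gam(a)$ with $a=\frac{n\pm 2s}2$.

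The only delicate point is justifying the interchange of $(-\De)^s$ with the parameter derivatives $\partial_{x_i}$ and $\partial_\lambda$, but I do not expect it to be a real obstacle: $u_*$ and all the $u_\lambda$ are smooth and decay like $|x|^{-(n-2s)}$ at infinity, so one may read off both commutations directly from the Fourier-multiplier formula $\widehat{(-\De)^s u}(\xi) = |\xi|^{2s}\hat u(\xi)$ by differentiating under the Fourier integral, or equivalently from the pointwise singular-integral representation of $(-\De)^s$ available when $0<s<1$.
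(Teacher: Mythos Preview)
Your proposal is correct and matches the paper's approach exactly: the paper simply states that the lemma follows from the relations $f_i=-\tfrac{2}{n-2s}\partial_{x_i}u_*$ and $f_{n+1}=-\tfrac{2}{n-2s}\partial_\lambda\big(\lambda^{(n-2s)/2}u_*(\lambda x)\big)\big|_{\lambda=1}$ together with~\eqref{eq:E-Le}, and you have spelled out precisely this argument, including the Gamma-function identity that produces the constant in~\eqref{eq:fi}. Your remark on justifying the commutation of $(-\Delta)^s$ with the parameter derivatives is a fair technical caveat that the paper leaves implicit.
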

We also notice that
\[\int_{\R^n} \frac{f_i(x) f_j(x)}{(1+|x|^2)^{2s}}\; dx = 0, \quad i, j = 0,1,\cdots, n+1, \quad i\not=j.\]

Next, we consider the other functional $\mG$ associated with the Hardy-Littlewood-Sobolev inequality as defined above.
\begin{proposition}
  With the above notation and~$f$ satisfying~\eqref{eq:ortho_cond_lin}, we have
\begin{equation}\label{eq:mGsecond}
\mG[(u_* + \epsilon f)^r] = \ep^2\lt(\frac{n+2s}{n-2s}\rt)^2\mathrm G[f] +o(\ep^2),
\end{equation}
where
\begin{align*}
\mathrm G[f] = \frac{\Gam(\frac{n-2s+2}2)}{2^{2s}\Gam(\frac{n+2s+2}2)}\int_{\R^n} \frac{f(x)^2}{(1+|x|^2)^{2s}}\; dx -\int_{\R^n}\frac{f(x)}{(1+|x|^2)^{2s}}(-\De)^{-s}\lt(\frac{f(x)}{(1+|x|^2)^{2s}}\rt)\; dx.
\end{align*}
\end{proposition}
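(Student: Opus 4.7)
The plan is to Taylor expand $v_\ep = (u_* + \ep f)^r$ in powers of $\ep$, then plug into $\mG$ and expand again. Writing $v_\ep = u_*^r + \ep\, r u_*^{r-1} f + \ep^2\,\tfrac{r(r-1)}{2}\,u_*^{r-2}f^2 + o(\ep^2)$ and using the crucial identities $u_*^{r-1} = (1+|x|^2)^{-2s}$, $u_*^{q-2} = (1+|x|^2)^{-2s}$, and (from \eqref{eq:E-Le}) $(-\De)^{-s} u_*^r = a^{-1}\,u_*$ with $a = 2^{2s}\Gam((n+2s)/2)/\Gam((n-2s)/2)$, both the nonlocal term $\int v_\ep (-\De)^{-s} v_\ep\,dx$ and the $L^p$ norm $\|v_\ep\|_{L^p(\R^n)}^2$ can be differentiated explicitly at $\ep=0$.

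The zeroth-order term vanishes since $\mG[u_*^r]=0$ (this is the equality case in \HLS{}, with $u_*^r$ proportional to $H_{n-2s}$). The first-order term is proportional to $\int u_* f\,(1+|x|^2)^{-2s}\,dx$: indeed, the contribution from the $L^p$ norm is $2S_{n,s}\|u_*^r\|_{L^p}^{2-p}\,r\int u_* f(1+|x|^2)^{-2s}\,dx$ (using $v_0^{p-1} = u_*^{r(p-1)} = u_*$ since $r(p-1)=1$), while the HLS contribution is $2r a^{-1}\int u_* f(1+|x|^2)^{-2s}\,dx$; the two combine with the factor $aS_{n,s}\|u_*\|_{L^q}^{q-2}=1$ (from the Sobolev-\HLS{} duality using $\|u_*\|_s^2 = a\|u_*\|_{L^q}^q$) so the whole thing is a multiple of $\int u_* f(1+|x|^2)^{-2s}\,dx$, which vanishes by the orthogonality assumption~\eqref{eq:ortho_cond_lin}.

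For the second-order term, differentiating once more gives four contributions: the $\|v_\ep\|_{L^p}^{2-2p}$ term drops out because it multiplies the square of $\int v_0^{p-1}\dot v_0\,dx$, which we just saw is zero; the remaining pieces produce an $L^p$ contribution $2S_{n,s}\|u_*^r\|_{L^p}^{2-p}\big[(p-1)r^2 + r(r-1)\big]\int f^2(1+|x|^2)^{-2s}\,dx$ and an \HLS{} contribution consisting of the second-variation term $-2r^2\int \tfrac{f}{(1+|x|^2)^{2s}}(-\De)^{-s}\tfrac{f}{(1+|x|^2)^{2s}}\,dx$ plus a cross term $-2 a^{-1} r(r-1)\int f^2(1+|x|^2)^{-2s}\,dx$. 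Using $(p-1)r=1$ and $S_{n,s}\|u_*^r\|_{L^p}^{2-p} = a^{-1}$, the coefficient of $\int f^2(1+|x|^2)^{-2s}\,dx$ simplifies to $a^{-1}(r^2+r(r-1)) - a^{-1}r(r-1) = a^{-1}r$.

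Factoring $r^2$ out of the whole expression then reduces matters to checking the constant identity $a^{-1}r = r^2\, b^{-1}$, where $b = 2^{2s}\Gam((n+2s+2)/2)/\Gam((n-2s+2)/2)$; equivalently, $b = ar$, which is immediate from the functional equation $\Gam(z+1) = z\Gam(z)$ applied to both ratios, giving $b/a = (n+2s)/(n-2s) = r$. Dividing by $2$ (from Taylor's theorem) then yields \eqref{eq:mGsecond}. The main technical obstacle is the careful bookkeeping of the exponents and Gamma-function constants so that the many cancellations line up; once the identifications $u_*^{r-1} = u_*^{q-2} = (1+|x|^2)^{-2s}$ and $r(p-1)=1$ are noted, the computation is essentially mechanical.
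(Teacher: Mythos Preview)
Your approach is correct and essentially identical to the paper's: both compute the first and second $\epsilon$-derivatives of $\mG[(u_*+\epsilon f)^r]$ using the Euler--Lagrange relation $(-\Delta)^{-s}u_*^r=a^{-1}u_*$, the orthogonality condition~\eqref{eq:ortho_cond_lin}, and the identifications $u_*^{r-1}=u_*^{q-2}=(1+|x|^2)^{-2s}$, then simplify the resulting constants via $b=ar$. There is one small slip in your bookkeeping---as literally written, $a^{-1}(r^2+r(r-1))-a^{-1}r(r-1)=a^{-1}r^2$, not $a^{-1}r$; what you mean is $a^{-1}\big((p-1)r^2+r(r-1)\big)-a^{-1}r(r-1)=a^{-1}(p-1)r^2=a^{-1}r$ (using $(p-1)r=1$)---after which the rest goes through exactly as you say.
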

\begin{proof}
First, $u_*^r$ solves the following integral equation which is the Euler-Lagrange equation associated with $\mG$:
\begin{equation}
  (-\De)^{-s} u_*^r =\frac{\Gam\(\frac{n-2s}2\)}{2^{2s}\Gam\(\frac{n+2s}2\)}\, u_*.
\end{equation}
Then
\begin{multline*}
  \frac{d}{d\ep}\lt(\mG[(u_* + \epsilon f)^r]\rt)_{\ep =0}
  = \frac{2S_{n,s}q}p \lt(\int_{\R^n}u_*^q\; dx\rt)^{\frac2p-1}\int_{\R^n}u_*^{q-1} f\; dx
  \\
  -2r\int_{\R^n} u_*^{r-1} f (-\De)^{-s}u_*^r\; dx =0,
\end{multline*}
since $u_*^{q-1}$ and $u_*^{r-1}(-\De)^{-s}u_*^r$ are proportional to $u_*(x) (1+|x|^2)^{-2s}$. By taking the second derivative, we get
\begin{align*}
\frac{d^2}{d\ep^2}\lt(\mG[(u_* + \epsilon f)^r]\rt)_{\ep =0}&= \frac{2S_{n,s}q(q-1)}p \lt(\int_{\R^n}u_*^q\; dx\rt)^{\frac2p-1}\int_{\R^n}u_*^{q-2} f^2\; dx\\
&\quad - 2r(r-1)\int_{\R^n} u_*^{r-2} f^2(-\De)^{-s} u_*^r\; dx\\
&\quad -2r^2 \int_{\R^n}u_*^{r-1}f (-\De)^{-s}(u_*^{r-1}f)\; dx\\
&= 2r^2\Bigg[\frac{\Gam\(\frac{n-2s+2}2\)}{2^{2s}\Gam\(\frac{n+2s+2}2\)}\int_{\R^n} \frac{f(x)^2}{(1+|x|^2)^{2s}}\; dx \\
&\quad-\int_{\R^n}\frac{f(x)}{(1+|x|^2)^{2s}}(-\De)^{-s}\lt(\frac{f(x)}{(1+|x|^2)^{2s}}\rt)\; dx\Bigg]\,.
\end{align*}
This concludes the proof.
\end{proof}

Next, by Legendre duality, we have
\begin{lemma}\label{dualitypro}
Suppose that $g$ satisfies the following conditions:
\begin{equation}\label{eq:ortho}
\int_{\R^n} \frac{g(x)f_i(x)}{(1+|x|^2)^{2s}}\; dx = 0, \quad i =1,\cdots, n+1.
\end{equation}
Then
\[\frac12 \int_{\R^n}\frac{g(x)^2}{(1+|x|^2)^{2s}}\; dx = \sup_f\lt(\int_{\R^n}\frac{f(x)g(x)}{(1+|x|^2)^{2s}}\; dx - \frac12 \int_{\R^n}\frac{f(x)^2}{(1+|x|^2)^{2s}}\; dx\rt),\]
and
\begin{align*}
\frac12 \int_{\R^n}\frac{g(x)}{(1+|x|^2)^{2s}}(-\De)^{-s}\lt(\frac{g(x)}{(1+|x|^2)^{2s}}\rt)\; dx =\sup_f \lt(\int_{\R^n}\frac{f(x)g(x)}{(1+|x|^2)^{2s}}\; dx -\frac12\|f\|_s^2 \rt),
\end{align*}
where supremum is taken over the functions $f$ satisfying the conditions~\eqref{eq:ortho}.

\end{lemma}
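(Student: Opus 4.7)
The plan is to prove each of the two identities by completion of the square, taking care in the second one to verify that the Euler--Lagrange optimizer is admissible with respect to the orthogonality conditions~\eqref{eq:ortho}. Throughout, let $w(x) = (1+|x|^2)^{-2s}$ denote the weight, and let $h(x) = g(x)w(x)$.

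For the first identity, the pointwise algebraic identity
\[
\int_{\R^n} fg\, w\; dx - \frac12 \int_{\R^n} f^2 w\; dx \;=\; \frac12 \int_{\R^n} g^2 w\; dx - \frac12 \int_{\R^n} (f-g)^2 w\; dx
\]
immediately gives the upper bound $\tfrac12 \int g^2 w\; dx$. Since by hypothesis $g$ itself satisfies the orthogonality conditions~\eqref{eq:ortho}, the choice $f = g$ is admissible, so the supremum is attained and equals $\tfrac12 \int g^2 w\; dx$.

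For the second identity, the unconstrained variational problem associated with $f \mapsto \int f h\; dx - \tfrac12 \|f\|_s^2$ has Euler--Lagrange equation $(-\Delta)^s f = h$, solved by $f_\star := (-\Delta)^{-s} h$. Substituting back and using self-adjointness,
\[
\int_{\R^n} f_\star h\; dx - \frac12 \|f_\star\|_s^2 \;=\; \frac12 \int_{\R^n} h\, (-\Delta)^{-s} h\; dx,
\]
which is precisely the claimed right-hand side. The one thing to verify is that $f_\star$ lies in the admissible class. Here Lemma~\ref{lemmafi} is crucial: each $f_i$ for $i=1,\dots,n+1$ satisfies $(-\Delta)^s f_i = \lambda\, f_i\, w$ with $\lambda = 2^{2s}\Gam(\tfrac{n+2s+2}{2})/\Gam(\tfrac{n-2s+2}{2})$, so for every $i$,
\[
\int_{\R^n} \frac{f_\star f_i}{(1+|x|^2)^{2s}}\; dx \;=\; \frac1\lambda \int_{\R^n} f_\star\, (-\Delta)^s f_i\; dx \;=\; \frac1\lambda \int_{\R^n} h\, f_i\; dx \;=\; \frac1\lambda \int_{\R^n} \frac{g\, f_i}{(1+|x|^2)^{2s}}\; dx \;=\; 0
\]
by the hypothesis on $g$. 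Since $f_\star$ is admissible and achieves the unconstrained supremum, it also achieves the constrained one, completing the identification.

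The main technical point to watch is the justification of the integration by parts $\int f_\star (-\Delta)^s f_i\; dx = \int h f_i\; dx$, because $f_i$ and $f_\star$ are only polynomially decaying. Since the assertion is a duality identity, it suffices to argue on a dense class (say Schwartz functions $g$), where one works on the Fourier side using Plancherel to make everything rigorous; the general case then follows by approximation within $\dWs$. The other identity requires no such care, as it reduces to a pointwise completion of the square under a weighted $L^2$ integral.
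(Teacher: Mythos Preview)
Your argument is correct and matches the Legendre-duality/completion-of-the-square approach the paper gestures at (the paper itself gives no details, only a pointer to \cite{Dol-Jan14}). In particular, your use of Lemma~\ref{lemmafi} to check that the unconstrained maximizer $f_\star=(-\Delta)^{-s}(g\,w)$ automatically satisfies the orthogonality constraints~\eqref{eq:ortho} is exactly the point that makes the constrained and unconstrained suprema coincide, and is the substantive step the paper leaves implicit.
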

\begin{proof}
The proof of this proposition is elementary and is completely similar with the one of the dual formulas in~\cite{Dol-Jan14}.
\end{proof}

Given $f \in \dWs$, we consider the function $F$ defined by~\eqref{eq:fFcau} and its decomposition on spherical harmonics
\begin{equation}\label{eq:Fdecomp}
F(\om) = \sum_{k=0}^\infty F_k(\om),
\end{equation}
where $F_k \in \mathcal{H}_k$. Using the Funk-Hecke theorem and the dual principle for $\|\cdot\|_s$, we obtain the following.
\begin{lemma}\label{dWsdual}
  With $f$ and $F$ taken as in \eqref{eq:fFcau}-\eqref{eq:Fdecomp}, we have
\begin{equation}\label{eq:dWsdual}
  \|f\|_s^2 = \sum_{k=0}^\infty \frac{\Gam\(\frac{2k+n+2s}2\)}{\Gam\(\frac{2k+n-2s}2\)} \int_{S^n} F_k(\om)^2 d\om\,.
\end{equation}
\end{lemma}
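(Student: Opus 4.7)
My plan is to reduce the computation of $\|f\|_s^2$ to a Funk-Hecke diagonalization on $S^n$, by combining the Legendre duality between $\|\cdot\|_s^2$ and the inverse form $\int\,\cdot\,(-\De)^{-s}\,\cdot\,dx$ with the stereographic transport formulas \eqref{eq:squarecau} and \eqref{eq:HLScau} already at our disposal.

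First I would invoke the unrestricted Legendre duality for the positive-definite quadratic form $\|f\|_s^2$, namely
\[
  \tfrac12\,\|f\|_s^2 \;=\; \sup_g\,\lt(\int_{\R^n} f\,g\,dx \;-\; \tfrac12\int_{\R^n} g\,(-\De)^{-s}g\,dx\rt),
\]
with the supremum attained at $g=(-\De)^s f$. Writing $(-\De)^{-s}$ as the Riesz potential with kernel $c_{n,s}\,|x-y|^{-(n-2s)}$, where $c_{n,s}=\Gam(\tfrac{n-2s}{2})/(2^{2s}\pi^{n/2}\Gam(s))$, turns the $g$-quadratic term into a double Riesz integral. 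Using the same Jacobian computation that underlies \eqref{eq:HLScau}, I would transport this to the sphere: the pairing $\int fg\,dx$ becomes $\int_{S^n}FG\,d\om$, where $G$ denotes the conformal lift of $g$ with weight $1/p$, $p=\tfrac{2n}{n+2s}$ (the analogue of \eqref{eq:fFcau} adapted to the HLS exponent), and the Riesz double integral becomes $c_{n,s}\,2^{-4s}\iint_{S^n\times S^n}G(\om)\,|\om-\eta|^{-(n-2s)}\,G(\eta)\,d\om\,d\eta$.

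Second I would diagonalize by Funk-Hecke. Writing $|\om-\eta|^{-(n-2s)}=2^{-(n-2s)/2}(1-\la\om,\eta\ra)^{-(n-2s)/2}$ and decomposing $G=\sum_k G_k$ with $G_k\in\mathcal H_k$, the Funk-Hecke theorem produces an eigenvalue on $\mathcal H_k$ proportional to $\Gam(k+\tfrac{n-2s}{2})/\Gam(k+\tfrac{n+2s}{2})$, obtained by evaluating the Gegenbauer integral against $(1-t)^{-(n-2s)/2}$ via a beta/gamma identity. After multiplying by the prefactor $c_{n,s}\,2^{-4s}$, the constants $\pi^{n/2}$, $2^{2s}$, $\Gam(s)$ and $\Gam(\tfrac{n-2s}{2})$ telescope and leave
\[
  \int_{\R^n} g\,(-\De)^{-s}g\,dx \;=\; \sum_{k\geq 0}\frac{\Gam(k+\tfrac{n-2s}{2})}{\Gam(k+\tfrac{n+2s}{2})}\int_{S^n}G_k^2\,d\om.
\]
Substituting this together with $\int fg\,dx=\sum_k\int_{S^n}F_kG_k\,d\om$ into the dual formula turns the supremum into an independent scalar quadratic optimization in each $G_k$, and reading off the maximum immediately yields \eqref{eq:dWsdual} with eigenvalues $\Gam(k+\tfrac{n+2s}{2})/\Gam(k+\tfrac{n-2s}{2})$.

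The main technical obstacle is the exact Funk-Hecke evaluation: one must compute
\[
  \int_{-1}^{1}(1-t)^{-(n-2s)/2}\,C_k^{(n-1)/2}(t)\,(1-t^2)^{(n-2)/2}\,dt
\]
in closed form and carefully track the constants ($\pi^{n/2}$, $2^{2s}$, $\Gam(s)$, $\Gam((n-2s)/2)$, $|S^{n-1}|$, and $c_{n,s}$) so that they telescope exactly to the claimed eigenvalue with no residue. A conceptually much shorter route would be to appeal directly to the conformal covariance of $(-\De)^s$ under stereographic projection, which identifies it with Branson's conformally invariant operator on $S^n$ whose spectrum on $\mathcal H_k$ is by construction $\Gam(k+\tfrac n2+s)/\Gam(k+\tfrac n2-s)$; this gives the lemma in one line but trades an elementary calculation for a deeper pseudo-differential fact.
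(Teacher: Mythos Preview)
Your proposal is correct and follows essentially the same route as the paper: Legendre duality for $\|\cdot\|_s^2$, stereographic transport of both the pairing and the Riesz double integral with the $1/p$-weighted lift $G$, Funk--Hecke diagonalization, and then the scalar optimization in each $G_k$. The only difference is that where you propose to compute the Funk--Hecke integral directly (or invoke Branson's operator), the paper simply cites \cite[Proposition~5.2 and Corollary~5.3]{FL12} for the eigenvalues $\gamma_k=\Gam\!\big(\tfrac{2k+n-2s}{2}\big)/\Gam\!\big(\tfrac{2k+n+2s}{2}\big)$, which spares you the constant-tracking you flagged as the main obstacle.
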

\begin{proof}
We have
\begin{align*}
\|f\|_s^2& = \sup_{g} \lt(2\int_{\R^n} f(x)\, g(x)\; dx- \int_{\R^n} g(x)\, (-\De)^{-s} g(x)\; dx\rt)\\
&= \sup_{g}\lt(2\int_{\R^n} f(x)\, g(x)\; dx - \frac{\Gam\(\frac{n-2s}2\)}{\pi^{n/2}\,2^{2s}\,\Gam(s)} \iint_{\R^n\times\R^n} g(x) |x-y|^{-n+2s} g(y)\; dx\; dy\rt).
\end{align*}
Defining the function $G$ on $S^n$ by
\[G(\om) = g(\mS^{-1}(\om))J_{\mS^{-1}}(\om)^{\frac1p},\qquad p = \frac{2n}{n+2s},\]
and considering its decomposition $G =\sum_{k=0}^\infty G_k$, $G_k\in \mathcal{H}_k$, we then have
\begin{align*}
  2&\int_{\R^n} f(x)\, g(x)\; dx - \frac{\Gam\(\frac{n-2s}2\)}{\pi^{n/2}\,2^{2s}\,\Gam(s)} \iint_{\R^n\times\R^n} g(x) |x-y|^{-n+2s} g(y)\; dx\; dy\\
  &= 2\int_{S^n}F(\om)\,G(\om) d\om -\frac{\Gam\(\frac{n-2s}2\)}{\pi^{n/2}\,2^{2s}\,\Gam(s)} \iint_{S^n\times S^n} G(\om) |\om-\eta|^{-n+2s} G(\eta)\;d\om \;d\eta.
\end{align*}
Since $|\om-\eta|^{-n+2s} = 2^{-\frac{n-2s}2}(1-\la\om,\eta\ra)^{-\frac{n-2s}2}$, by \cite[Propostion $5.2$]{FL12} the integral operator with kernel $\frac{\Gam\(\frac{n-2s}2\)}{\pi^{n/2}\,2^{2s}\,\Gam(s)}|\om -\eta|^{-n+2s}$ is diagonal with respect to the decomposition~\eqref{eq:decomp}, and its eigenvalues are given by (see~\cite[Corollary $5.3$]{FL12})
\begin{equation}\label{eq:eigenvalue}
  \ga_k = \frac{\Gam\(\frac{2k+n-2s}2\)}{\Gam\(\frac{2k+n+2s}2\)}, \qquad k = 0, 1, 2\cdots.
\end{equation}
This implies that
\begin{align*}
  2\int_{\R^n} f(x)\, g(x)\; dx - &\frac{\Gam\(\frac{n-2s}2\)}{\pi^{n/2}\,2^{2s}\,\Gam(s)} \iint_{\R^n\times\R^n} g(x) |x-y|^{-n+2s} g(y)\; dx\; dy\\
&= \sum_{k=0}^{\infty}\lt(2\int_{S^n} F_k(\om)\,G_k(\om) d\om -\ga_k \int_{S^n} G_k(\om)^2 d\om\rt)\\
&\leq \sum_{k=0}^\infty \frac1{\ga_k} \int_{S^n} F_k(\om)^2 d\om.
\end{align*}
\end{proof}
As a consequence, if $f$ satisfies the conditions~\eqref{eq:ortho}, then $f$ satisfies the following Poincar\'e type inequality:
\begin{align}\label{eq:Poincaretype}
  \|f\|_s^2 \geq \frac{2^{2s}\Gam\(\frac{n+2s +4}2\)}{\Gam\(\frac{n-2s +4}2\)} \int_{\R^n} \frac{f(x)^2}{(1+|x|^2)^{2s}}\; dx.
\end{align}
Indeed, using the stereographic projection, we have
\[\int_{S^n} F(\om) d\om = \int_{\R^n} f(x)f_0(x) (1+|x|^2)^{-2s}\; dx =0,\]
and
\[\int_{S^n}F(\om)\, \om_i\; d\om = \int_{\R^n} f(x)\, f_i(x)\, (1+|x|^2)^{-2s}\; dx = 0,\qquad i =1,2,\cdots, n+1.\]
This shows that $F_0 = F_1 =0$ in the decomposition~\eqref{eq:Fdecomp} of $F$, then
\begin{align*}
  \|f\|_s^2 &\geq \frac{\Gam\(\frac{n+2s +4}2\)}{\Gam\(\frac{n-2s +4}2\)} \int_{S^n} F(\om)^2 d\om\\
  &= \frac{2^{2s}\Gam\(\frac{n+2s +4}2\)}{\Gam\(\frac{n-2s +4}2\)} \int_{\R^n} \frac{f(x)^2}{(1+|x|^2)^{2s}}\; dx.
\end{align*}

To sum up, we have
\begin{proposition}
\item (i) If $f\in \dWs$ satisfies the conditions~\eqref{eq:ortho} then
\begin{equation}\label{eq:Ff}
  \mathrm F[f] \geq \frac{4s}{n-2s+2}\, \frac{2^{2s}\Gam\(\frac{n+2s+2}2\)}{\Gam\(\frac{n-2s+2}2\)}\,\int_{\R^n}\frac{f(x)^2}{(1+|x|^2)^{2s}}\; dx.
\end{equation}
\item (ii) If $g$ satisfies the conditions~\eqref{eq:ortho} then
\begin{equation}\label{eq:Gg}
  \mathrm G[g] \geq \frac{4s}{n+2s+2}\, \frac{\Gam\(\frac{n-2s+2}2\)}{2^{2s}\Gam\(\frac{n+2s+2}2\)}\, \int_{\R^n} \frac{g(x)^2}{(1+|x|^2)^{2s}} \,\; dx.
\end{equation}
\end{proposition}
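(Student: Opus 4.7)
The plan is to derive both parts from two ingredients already established: the Poincar\'e-type bound~\eqref{eq:Poincaretype} and the duality formulas of Lemma~\ref{dualitypro}. Throughout, set
\[A := \int_{\R^n} \frac{f(x)^2}{(1+|x|^2)^{2s}}\,dx, \qquad B := \int_{\R^n} \frac{g(x)^2}{(1+|x|^2)^{2s}}\,dx,\]
\[C := \frac{2^{2s}\Gam\!\left(\frac{n+2s+2}{2}\right)}{\Gam\!\left(\frac{n-2s+2}{2}\right)}, \qquad C' := \frac{2^{2s}\Gam\!\left(\frac{n+2s+4}{2}\right)}{\Gam\!\left(\frac{n-2s+4}{2}\right)}.\]
The identity $\Gam(x+1) = x\Gam(x)$ immediately yields $C'/C = (n+2s+2)/(n-2s+2)$, whence $C' - C = \tfrac{4s}{n-2s+2}\,C$.

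Part~(i) is an immediate consequence of Poincar\'e. By definition $\mathrm F[f] = \|f\|_s^2 - C A$, and~\eqref{eq:Poincaretype} asserts $\|f\|_s^2 \ge C' A$, so
\[\mathrm F[f] \ \ge \ (C' - C)\,A \ = \ \frac{4s}{n-2s+2}\,C\,A,\]
which is precisely~\eqref{eq:Ff}.

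Part~(ii) will combine the Poincar\'e bound with Legendre duality to invert the roles of $f$ and $g$. The second identity in Lemma~\ref{dualitypro} rewrites, for $g$ satisfying~\eqref{eq:ortho},
\[\int_{\R^n} \frac{g}{(1+|x|^2)^{2s}}(-\De)^{-s}\!\left(\frac{g}{(1+|x|^2)^{2s}}\right) dx = \sup_{f} \left[\, 2\int_{\R^n} \frac{f g}{(1+|x|^2)^{2s}}\,dx - \|f\|_s^2 \,\right],\]
with the supremum taken over $f$ also satisfying~\eqref{eq:ortho}. For each such admissible $f$, Cauchy-Schwarz on the measure $(1+|x|^2)^{-2s}\,dx$ gives $\int fg/(1+|x|^2)^{2s}\,dx \le \sqrt{A B}$, while~\eqref{eq:Poincaretype} gives $\|f\|_s^2 \ge C' A$. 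The bracket is therefore dominated by $2\sqrt{A B} - C' A$, whose maximum over $A\ge 0$ equals $B/C'$. Inserting this upper bound into the definition of $\mathrm G[g]$ yields
\[\mathrm G[g] \ \ge \ \frac{B}{C} - \frac{B}{C'} \ = \ \frac{C' - C}{C\,C'}\,B \ = \ \frac{4s}{n+2s+2}\,\frac{B}{C},\]
which matches~\eqref{eq:Gg}.

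There is no genuine obstacle: the proposition is a mechanical consequence of~\eqref{eq:Poincaretype} and Lemma~\ref{dualitypro}. The only point deserving an explicit remark is that the supremum in Lemma~\ref{dualitypro} and the Poincar\'e inequality both constrain $f$ to the same admissible class defined by~\eqref{eq:ortho}, so the two estimates chain legally. Everything else reduces to Gamma-function bookkeeping and a one-variable optimisation in $A$.
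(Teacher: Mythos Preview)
Your proof is correct and follows the same approach as the paper: part~(i) is identical, and for part~(ii) both the paper and you combine the Poincar\'e bound~\eqref{eq:Poincaretype} with the duality of Lemma~\ref{dualitypro} to obtain the key estimate $\int_{\R^n}\frac{g}{(1+|x|^2)^{2s}}(-\De)^{-s}\big(\frac{g}{(1+|x|^2)^{2s}}\big)\,dx\le B/C'$. The only cosmetic difference is that the paper states this last bound directly (monotonicity of Legendre transforms), while you spell it out via Cauchy--Schwarz and a one-variable optimisation; these are the same computation.
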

\begin{proof}
Item $(i)$ follows immediately from the definition of $\mathrm F[f]$ and \eqref{eq:Poincaretype},
while for~$(ii)$, from~\eqref{eq:Poincaretype} and Corollary~\ref{dualitypro}, we have
\[\int_{\R^n}\frac{g(x)}{(1+|x|^2)^{2s}}(-\De)^{-s}\lt(\frac{g(x)}{(1+|x|^2)^{2s}}\rt)\; dx \leq \frac{\Gam\(\frac{n-2s+4}2\)}{2^{2s}\Gam\(\frac{n+2s+4}2\)} \int_{\R^n} \frac{g(x)^2}{(1+|x|^2)^{2s}} \,\; dx.\]
Using the definition of $\mathrm G[g]$, we obtain~\eqref{eq:Gg}.
\end{proof}
\begin{corollary}
If $f\in \dWs$ and  satisfies the conditions~\eqref{eq:ortho}, then
\begin{equation}\label{eq:comFG}
  \mathrm G[f] \leq 2^{-4s}\,\frac{n-2s+2}{n+2s+2}\lt(\frac{\Gam\(\frac{n-2s+2}2\)}{\Gam\(\frac{n+2s+2}2\)}\rt)^2\, \mathrm F[f],
\end{equation}
and equality holds if and only if the function $F$ defined by~\eqref{eq:fFcau} belongs to $\mathcal{H}_2$.
\end{corollary}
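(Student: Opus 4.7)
The plan is to diagonalize both sides on the spherical harmonic basis via the stereographic projection and the Funk--Hecke theorem, and compare them term by term. With $F = \sum_{k\ge 0}F_k$ the decomposition of the function associated with $f$ through \eqref{eq:fFcau}, set $a_k := \Gamma\!\bigl(\tfrac{2k+n+2s}{2}\bigr)/\Gamma\!\bigl(\tfrac{2k+n-2s}{2}\bigr)$, so the eigenvalues appearing in Lemma~\ref{dWsdual} and the constant defining $\mathrm F$ are $a_k$ and $a_1$ respectively. Using $\Gamma(x+1)=x\Gamma(x)$ one checks $a_{k+1}/a_k = (2k+n+2s)/(2k+n-2s)>1$, so the sequence $(a_k)$ is strictly increasing, and in particular $a_2/a_1 = (n+2s+2)/(n-2s+2)$; hence the right-hand constant in \eqref{eq:comFG} is precisely $2^{-4s}/(a_1 a_2)$.

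Combining Lemma~\ref{dWsdual} with the stereographic identity \eqref{eq:squarecau} gives at once
\[\mathrm F[f] = \sum_{k\ge 0}(a_k - a_1)\int_{S^n}F_k^2\,d\omega.\]
For $\mathrm G[f]$ I would expand the bilinear term $\int_{\R^n}\frac{f}{(1+|x|^2)^{2s}}(-\De)^{-s}\!\bigl(\tfrac{f}{(1+|x|^2)^{2s}}\bigr)\,dx$: expressing $(-\De)^{-s}$ as a Riesz convolution and using the conformal identity $|\mS(x)-\mS(y)|^2 = 4|x-y|^2/\bigl((1+|x|^2)(1+|y|^2)\bigr)$, as in the proof of \eqref{eq:HLScau}, transports the double integral to $S^n\times S^n$ with a factor $2^{-4s}$; Funk--Hecke together with the eigenvalues \eqref{eq:eigenvalue} then yields
\[\mathrm G[f] = 2^{-4s}\sum_{k\ge 0}\frac{a_k - a_1}{a_1 a_k}\int_{S^n}F_k^2\,d\omega.\]

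Subtracting $2^{-4s}(a_1 a_2)^{-1}\mathrm F[f]$ from $\mathrm G[f]$ reduces \eqref{eq:comFG} to the termwise estimate
\[\sum_{k\ge 0}\frac{(a_k-a_1)(a_2-a_k)}{a_k}\int_{S^n}F_k^2\,d\omega \le 0.\]
The orthogonality conditions \eqref{eq:ortho} force $F_1=0$ (exactly as in the derivation of \eqref{eq:Poincaretype}), the $k=2$ coefficient vanishes identically, and for every other $k$ the factor $(a_k-a_1)(a_2-a_k)$ is nonpositive by monotonicity: negative at $k=0$ because $a_0<a_1<a_2$, negative at $k\ge 3$ because $a_1<a_2<a_k$. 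Each nonzero $F_k$ with $k\notin\{1,2\}$ contributes a strictly negative term, so equality forces $F_k=0$ for all $k\ne 2$, i.e.\ $F\in\mathcal{H}_2$. The one technically delicate step is the constant bookkeeping when transferring the Riesz kernel to the sphere; the rest reduces to a finite manipulation of Gamma function ratios.
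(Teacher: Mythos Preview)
Your proof is correct and follows the same diagonalization via stereographic projection and Funk--Hecke that the paper uses: the paper introduces $\alpha_k=a_k-a_1$ and $\beta_k=(a_k-a_1)/(a_1a_k)$ and bounds $\beta_k/\alpha_k\le\beta_2/\alpha_2$, which is exactly your termwise comparison rewritten. One small plus in your version: you keep the $k=0$ term and dispose of it by the sign $(a_0-a_1)(a_2-a_0)<0$, whereas the paper silently drops it by writing $F=\sum_{k\ge2}F_k$ --- strictly speaking \eqref{eq:ortho} alone (for $i=1,\dots,n+1$) only kills $F_1$, and $F_0=0$ comes from the separate condition \eqref{eq:ortho_cond_lin}, so your treatment is slightly more self-contained.
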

\begin{proof}
Considering the function $F$ defined by~\eqref{eq:fFcau} and its decomposition $F = \sum_{k=2}^\infty F_k$, we know that
\[\|f\|_s^2 = \sum_{k\geq 2}\frac{1}{\ga_k}\int_{S^n} F_k(\om)^2 d\om,\]
where $\ga_k$ is given by~\eqref{eq:eigenvalue}.
Using equality~\eqref{eq:squarecau}, we also have
\[\int_{\R^n}\frac{f(x)^2}{(1+|x|^2)^{2s}}\; dx = 2^{-2s}\int_{S^n}F(\om)^2 d\om = 2^{-2s}\sum_{k=2}^\infty \int_{S^n}F_k(\om)^2 d\om.\]
From these equalities, we get
\begin{align}\label{eq:Ffsphere}
  \mathrm F[f]& = \sum_{k=2}^\infty\lt(\frac1{\ga_k} -\frac{\Gam\(\frac{n+2s+2}2\)}{\Gam\(\frac{n-2s+2}2\)}\rt) \int_{S^n}F_k(\om)^2 d\om\notag\\
&=\sum_{k=2}^\infty \al_k \int_{S^n}F_k(\om)^2 d\om,
\end{align}
with
\[\al_k = \frac{\Gam\(\frac{n+2s+2k}2\)\Gam\(\frac{n-2s+2}2\) -\Gam\(\frac{n-2s+2k}2\)\Gam\(\frac{n+2s+2}2\)}{\Gam\(\frac{n-2s+2k}2\)\Gam\(\frac{n-2s+2}2\)}.\]
Denote $g(x) = f(x) (1+|x|^2)^{-2s}$. Using the integral expression of $(-\De)^{-s}$ and equality~\eqref{eq:HLScau},
\begin{align*}
  \int_{\R^n} g(x)(-\De)^{-s}g(x)\; dx &= 2^{-4s} \frac{\Gam\(\frac{n-2s}2\)}{\pi^{n/2}\,2^{2s}\,\Gam(s)}\iint_{S^n\times S^n} F(\om)|\om-\eta|^{-n+2s} F(\eta) \;d\om \;d\eta\\
&=2^{-4s} \sum_{k\geq 2}\ga_k \int_{S^n} F_k(\om)^2 d\om.
\end{align*}
Therefore, we get
\begin{align}\label{eq:Ggsphere}
  \mathrm G[f]& =\sum_{k=1}^\infty\lt(\frac{\Gam\(\frac{n-2s+2}2\)}{2^{4s}\Gam\(\frac{n+2s+2}2\)}-\frac{\ga_k}{2^{4s}}\rt)\int_{S^n} F_k(\om)^2 d\om\notag\\
&= \frac1{2^{4s}} \sum_{k=2}^\infty \be_k \int_{S^n} F_k(\om)^2 d\om,
\end{align}
with
\[\be_k = \frac{\Gam\(\frac{n+2s+2k}2\)\Gam\(\frac{n-2s+2}2\) -\Gam\(\frac{n-2s+2k}2\)\Gam\(\frac{n+2s+2}2\)}{\Gam\(\frac{n+2s+2k}2\)\Gam\(\frac{n+2s+2}2\)}.\]
We have $\al_k, \be_k > 0$ for all $k\geq 2$. Moreover, we can prove that
\[\frac{\be_k}{\al_k} \leq \frac{\be_2}{\al_2} = \frac{n-2s+2}{n+2s+2}\lt(\frac{\Gam\(\frac{n-2s+2}2\)}{\Gam\(\frac{n+2s+2}2\)}\rt)^2,\quad \text{for all } k\geq 2,\]
and equality holds if $k=2$. From this inequality, we have
\[\mathrm G[f] = \frac1{2^{4s}} \sum_{k=2}^\infty \be_k \int_{S^n} F_k(\om)^2 d\om \leq 2^{-4s}\,\frac{n-2s+2}{n+2s+2}\lt(\frac{\Gam\(\frac{n-2s+2}2\)}{\Gam\(\frac{n+2s+2}2\)}\rt)^2\, \mathrm F[f].\]
This proves the inequality~\eqref{eq:comFG}. Additionally, we see from the proof that equality in~\eqref{eq:comFG} occurs if and only if $\int_{S^n}F_k(\om)^2 d\si(\om) = 0$ for all $k\geq 3$, hence $F\in \mathcal{H}_2$.
\end{proof}
As a consequence, we have
\begin{align}\label{eq:finish}
  \sup_f \frac{G(f)}{F(f)} =2^{-4s}\,\frac{n-2s+2}{n+2s+2}\lt(\frac{\Gam\(\frac{n-2s+2}2\)}{\Gam\(\frac{n+2s+2}2\)}\rt)^2,
\end{align}
where supremum is taken over $f\in \dWs$, $f\not=0$, and $f$ satisfying the conditions~\eqref{eq:ortho}.

We can now prove the first inequality in \eqref{eq:bestconstant} of Theorem~\ref{maintheorem}.

\begin{proof}[Proof of \eqref{eq:bestconstant}]
For all $f\in \dWs$, $f\not=0$ and $f$ satisfies the conditions~\eqref{eq:ortho}, denote $u_\ep = u_* +\ep f$, then
\[C_{n,s}^* \|u_\ep\|_{L^{\frac{2n}{n-2s}}(\R^n)}^{\frac{8s}{n-2s}}\geq \frac{\mG[u_\ep^r]}{\mF[u_\ep]}.\]
Let $\epsilon\to 0^+$, we get
\[C_{n,s}^* \geq \frac1{\|u_*\|_{L^{\frac{2n}{n-2s}}(\R^n)}^{\frac{8s}{n-2s}}S_{n,s}}\lt(\frac{n+2s}{n-2s}\rt)^2 \frac{G(f)}{F(f)}\]
Taking supremum over $f\in \dWs$, $f\not=0$, and $f$ satisfying the conditions~\eqref{eq:ortho}, using~\eqref{eq:finish} and the fact that
\[\int_{\R^n}u_*(x)^{\frac{2n}{n-2s}}\; dx = \int_{\R^n} (1+|x|^2)^{-n}\; dx = \pi^{\frac n2} \,\frac{\Gam\(\frac n2\)}{\Gam(n)},\]
we get
\[C_{n,s}^* \geq \frac{n-2s+2}{n+2s+2} S_{n,s}\]
as desired.
\end{proof}

\section{Improved Moser-Trudinger-Onofri inequality via endpoint differentiation}
\label{sec:moser-trudinger}

This section is dedicated to the proof of Theorem~\ref{improveMT}. By an approximation argument, it suffices to prove the inequality~\eqref{eq:improveMT} for bounded functions. We first prove for functions $F$ such that $\int_{S^n} F(\xi) d\xi = 0$. We define a new function $u$ on $\R^n$ by
\begin{equation}\label{eq:ufunc}
  u(x) = \lt(1+ \frac{n-2s}{2n} F(\mS(x))\rt) J_\mS(x)^{-(s-\frac n2)}\,.
\end{equation}
Since $F$ is bounded, then $u$ is positive when $s$ is close enough to $\frac n2$. Considering the expansion of $F$ in terms of spherical harmonics $F = \sum_{k\geq 1} F_k$ with $F_k \in \mathcal{H}_k$, it follows from Lemma~\ref{dWsdual} that
\[\|u\|_s^2 = |S^n|\frac{\Gam(\frac{n+2s}2)}{\Gam(\frac{n-2s}2)} + |S^n|\frac{(n-2s)^2}{4n^2}\sum_{k\geq 1} \frac{\Gam(\frac{2k+n+2s}2)}{\Gam(\frac{2k+n-2s}2)} \int_{S^n} F_k^2 d\si.\]
Using the stereographic projection, we get
\[\|u\|_{L^{\frac{2n}{n-2s}}(\R^n)}^2 = |S^n|^{\frac{n-2s}n}\lt(\int_{S^n} \lt(1 + \frac{n-2s}{2n} F\rt)^{\frac{2n}{n-2s}} d\si\rt)^{\frac{n-2s}n}.\]
For simplicity, we denote $t = \frac{n-2s}{2n}$, then
\begin{multline}\label{eq:xv}
S_{n,s}  \|u\|_{L^{\frac{2n}{n-2s}}(\R^n)}^{\frac{8s}{n-2s}}\lt( S_{n,s} \|u\|_s^2 - \|u\|_{L^{\frac{2n}{n-2s}}(\R^n)}^2\rt)\\
 = |S^n| \frac{\Gam(nt)}{\Gam(n(1-t))}\lt[\lt(\int_{S^n} \lt(1 + t F\rt)^{\frac 1t} d\si\rt)^{2-4t} - \lt(\int_{S^n} \lt(1 + t F\rt)^{\frac 1t} d\si\rt)^{2-2t}\rt]\\
\quad + |S^n| \frac{t^2 \Gam(nt)^2}{\Gam(n(1-t))^2} \lt[\sum_{k\geq 1}\frac{\Gam(k+n(1-t))}{\Gam(k+ nt)} \int_{S^n} F_k^2 d\si\rt]\lt(\int_{S^n} \lt(1 + t F\rt)^{\frac 1t} d\si\rt)^{2-4t}.
\end{multline}
Since $\Gam(nt) \sim 1/(nt)$ when $t\to 0^+$, by taking $t \to 0^+$ (or $s \to  \frac n2$) in~\eqref{eq:xv}, we obtain
\begin{multline}\label{eq:vp}
\lim\limits_{s \to \frac n2} \lt[S_{n,s}  \|u\|_{L^{\frac{2n}{n-2s}}(\R^n)}^{\frac{8s}{n-2s}}\lt( S_{n,s} \|u\|_s^2 - \|u\|_{L^{\frac{2n}{n-2s}}(\R^n)}^2\rt)\rt]\\
 = -\frac{2 |S^n|}{n \Gam(n)} \lt(\int_{S^n} e^F d\si\rt)^2 \log\lt(\int_{S^n} e^F d\si\rt)\\
\quad + \frac{|S^n|}{n^2 \Gam(n)^2}\lt[\sum_{k\geq 1}\frac{\Gam(k+n)}{\Gam(k)} \int_{S^n} F_k^2 d\si\rt] \lt(\int_{S^n} e^{F} d\si\rt)^2.
\end{multline}

We also have
\begin{equation}
\label{eq:vt1}
S_{n,s} \|u^{\frac{n+2s}{n-2s}}\|_{L^{\frac{2n}{n+2s}}(\R^n)}^2
= \frac{|S^n| \Gam(nt)}{\Gam(n(1-t))}\lt(\int_{S^n} (1 +tF)^{\frac{1}t}d\si\rt)^{2-2t}
\end{equation}

\begin{align}
\label{eq:vt2}
& \int_{\R^n} u^{\frac{n+2s}{n-2s}} (-\De)^{-s} u^{\frac{n+2s}{n-2s}}\; dx \\
&= \frac{|S^n|^2 \Gam(nt)}{4^s \pi^{\frac n2}\Gam(s)}\iint_{S^n\times S^n}
\frac{(1+tF(\xi))^{\frac{1-t}t}(1+t F(\eta))^{\frac{1-t}t}}{|\xi -\eta|^{2nt}}  \;d\si(\xi) \;d\si(\eta)\nonumber\\
&= \frac{|S^n|\Gam(n-nt)}{4^s \pi^{\frac n2}\Gam(s)}\lt(\int_{S^n} (1 +tF)^{\frac{1-t}t}d\si\rt)^{2}\nonumber\\
&\quad+ \frac{|S^n|^2 \Gam(nt)}{4^s \pi^{\frac n2}\Gam(s)}\iint_{S^n\times S^n}
\frac{(1+tF(\xi))^{\frac{1-t}t} (1+t F(\eta))^{\frac{1-t}t} }{\lt(|\xi -\eta|^{-2nt} -1\rt)^{-1}} \;d\si(\xi) \;d\si(\eta)\,.\nonumber
\end{align}
Letting $s\to \frac n2$ (\emph{i.e.} $t\to 0$) in~\eqref{eq:vt1}-\eqref{eq:vt2}, we obtain
\begin{multline}\label{eq:vt}
\lim\limits_{s\to \frac n2} \lt[S_{n,s} \|u^{\frac{n+2s}{n-2s}}\|_{L^{\frac{2n}{n+2s}}(\R^n)}^2  - \int_{\R^n} u^{\frac{n+2s}{n-2s}} (-\De)^{-s} u^{\frac{n+2s}{n-2s}}\; dx\rt]\\
=\frac{|S^n|}{\Gam(n)} \lt(\int_{S^n} e^F d\si\rt)^2\lt(\Psi(n) -\Psi\(\frac n2\) -\log 4 +\frac 2n \frac{\ent_\sigma(e^F)}{\int_{S^n} e^{F} d\si} \rt) \\
\quad + \frac{|S^n|}{\Gam(n)}\iint_{S^n \times S^n} e^{F(\xi)} \, \log \lt(|\xi -\eta|^2\rt) \, e^{F(\eta)} d\si(\xi) d\si(\eta)\,,
\end{multline}
where $\ent_\sigma(f) = \int_{S^n} f \log f\; d\sigma - (\int_{S^n} f\; d\sigma) \log (\int_{S^n} f\; d\sigma)$.

Now, applying the inequality~\eqref{eq:mainresult} to function $u$ defined by~\eqref{eq:ufunc}, then letting $s\to \frac n2$, and using the equalities~\eqref{eq:vp} and~\eqref{eq:vt}, we obtain
\begin{align}\label{eq:meanzero}
\lt(\int_{S^n} e^F d\si\rt)^2 &\lt[\frac1{2n}\sum_{k\geq 1} \frac{\Gam(k+n)}{\Gam(n)\Gam(k)} \int_{S^n} |F_k|^2 d\si - \log\lt(\int_{S^n} e^F d\si\rt)\rt]\notag\\
&\geq \frac n2\iint_{S^n \times S^n} e^{F(\xi)} \, \log \lt(|\xi -\eta|^2\rt) \, e^{F(\eta)} d\si(\xi) d\si(\eta)\notag\\
&\quad + \lt(\int_{S^n} e^F d\si\rt)^2\lt[\frac n2\lt(\Psi(n) -\Psi \(\frac n2\) -\log 4\rt) + \frac{\ent_\sigma(e^F)}{\int_{S^n} e^{F} d\si}\rt].
\end{align}
For any bounded function $F$, applying~\eqref{eq:meanzero} to function $F -\int_{S^n} F d\si$, we obtain~\eqref{eq:improveMT} with $C_n = 1$.

The above proof shows that $C_{n}^* \leq 1$. Let us now prove $C_n^* \geq \frac1{n+1}$. Indeed, for any function $F$ such that $\int_{S^n} F d\si =0$. Considering an expansion of $F$ by $F = \sum_{k\geq 1} F_k$, with $F_k \in \mathcal{H}_k$ and applying inequality~\eqref{eq:improveMT} to the function $\epsilon F$ with $\ep >0$, we get
\begin{multline}\label{eq:improveMTep}
C_n^*\lt(\int_{S^n} e^{\ep F} d\si\rt)^2 \lt[\frac{\ep^2}{2n}\sum_{k\geq 1} \frac{\Gam(k+n)}{\Gam(n)\Gam(k)} \int_{S^n} |F_k|^2 d\si - \log\lt(\int_{S^n} e^{\ep F} d\si\rt)\rt]\\
\geq \frac n2\iint_{S^n \times S^n} e^{\ep F(\xi)} \, \log \lt(|\xi -\eta|^2\rt) \, e^{\ep F(\eta)} d\si(\xi) d\si(\eta)\\
\quad + \lt(\int_{S^n} e^{\ep F} d\si\rt)^2\lt[\frac n2\lt(\Psi(n) -\Psi \(\frac n2\) -\log 4\rt) + \frac{\ent_\sigma(e^{\ep F})}{\int_{S^n} e^{\ep F} d\si}\rt].
\end{multline}
When $\ep$ is small, we have
\[\int_{S^n} e^{\ep F} d\si = 1 + \frac{\ep^2}2 \int_{S^n} |F|^2 d\si + o(\ep^2),\]
\[\ent_\sigma(e^{\ep F}) = \frac{\ep^2}2 \int_{S^n} |F|^2 d\si + o(\ep^2).\]
Moreover, since
\[\int_{S^n} \log\lt(|\xi -\eta|^2\rt) d\si(\eta) = -\lt(\Psi(n) -\Psi \(\frac n2\)-\log 4\rt)=:A(n),\]
then
\begin{multline*}
\iint_{S^n \times S^n} e^{\ep F(\xi)} \, \log \lt(|\xi -\eta|^2\rt) \, e^{\ep F(\eta)} d\si(\xi) d\si(\eta)
\\= A(n)+ \ep^2 A(n)\int_{S^n }|F|^2 d\si - \ep^2 \sum_{k\geq 1} \frac{\Gam(n)\Gam(k)}{\Gam(n+k)} \int_{S^n} |F_k|^2 d\si + o(\ep^2).
\end{multline*}
Substituting these above estimates into~\eqref{eq:improveMTep}, we obtain
\begin{multline*}
\frac{\ep^2}2 C_n^* \sum_{k\geq 2} \lt(\frac{\Gam(n+k)}{\Gam(n+1)\Gam(k)} -1\rt) \int_{S^n}|F_k|^2 d\si + o(\ep^2)\\
\geq \frac{\ep^2}2 \sum_{k\geq 2} \lt(1- \frac{\Gam(n+1)\Gam(k)}{\Gam(n+k)}\rt) \int_{S^n} |F_k|^2 d\si + o(\ep^2),
\end{multline*}
since $\Gam(n+1) = n\, \Gam(n)\, \Gam(1)$. If $F_k\not=0$ for some $k\geq 2$, then dividing both sides by $\frac{\ep^2}2$ and letting $\ep\to 0$, we get
\[C_n^*\geq \frac{\sum_{k\geq 2} \lt(1- \frac{\Gam(n+1)\Gam(k)}{\Gam(n+k)}\rt) \int_{S^n} |F_k|^2 d\si}{\sum_{k\geq 2} \lt(\frac{\Gam(n+k)}{\Gam(n+1)\Gam(k)} -1\rt) \int_{S^n}|F_k|^2 d\si}.\]
Taking supremum over $F = \sum_{k\geq 1} F_k$, $F_k\not=0$ for some $k\geq 2$, we obtain
\begin{align*}
C_n^* &\geq \sup\lt\{\frac{\sum_{k\geq 2} \lt(1- \frac{\Gam(n+1)\Gam(k)}{\Gam(n+k)}\rt) \int_{S^n} |F_k|^2 d\si}{\sum_{k\geq 2} \lt(\frac{\Gam(n+k)}{\Gam(n+1)\Gam(k)} -1\rt) \int_{S^n}|F_k|^2 d\si}\,: \, F =\sum_{k\geq 1} F_k,\, F_k\not=0\, \text{ for some } k\geq 2\rt\}\\
&= \frac1{n+1}.
\end{align*}
This completes the proof of Theorem~\ref{improveMT}.

\section{Fractional fast diffusion flow}
\label{sec:flow}

At this point, we know using the expansion of the square that $C_{n,s}^* \le S_{n,s}$, so that if we define
\[
  \mathcal C = \frac{C_{n,s}^*}{S_{n,s}}\, ,
\]
we know $\mathcal C \le 1$. In this section we will show that in fact $\mathcal C < 1$ when $0 < s < 1$. This condition is enforced throughout this section.
With the notations above, we consider the following fractional fast diffusion equation:
\begin{align}
  \partial_t v &+(-\Delta)^s v^m=0\,,\quad t > 0\,,\quad x\in \R^n\,,\quad m = \frac 1r  = \frac{n-2s}{n+2s}\,.
  \label{eq:frac-fde}
  \\
  v(0) &= v_0\,.\nonumber
\end{align}
which is well posed for $v_0 \in L^1 \bigcap L^\ell$ for some $\ell > \frac{2n}{n+2s}$ according to \cite[Theorem 2.3]{dePQuiRodVaz12}.
We will take initial datum $v$ with sufficient decay at infinity, \emph{e.g.} in the Schwartz space.

Let us define
\begin{align*}
  \G_0 = \G[v_0]\,
  \quad
  \J[v(t)] &= \int_{\R^n} v^p = \int_{\R^n} u^q\,,
  \quad
  \J_0 := \J[v_0]\,.
  \intertext{which is such that}
  \J' := \frac{d}{dt}\J &= -p \int_{\R^n} \left|(-\Delta)^{\frac{s}{2}} u\right|^2\,,
\end{align*}
We can now consider the evolution along the flow of the functional $\G$ associated
to the \HLS{} inequality. An easy computation gives
\[
  -\,\G'[v] = 2 \left( \int_{\R^n} v^{\frac{2n}{n+2s}} \right)^{\frac{2s}{n}}
  \F[v^m]
  = 2\, \J^{\frac{2s}{n}}\, \F[u]\,,
\]
which is nonnegative according to the fractional Sobolev inequality~\eqref{eq:fSI}. Hence,~$-\,\G[v]$ is nondecreasing and
stationary only when $u$ is an extremal function for~\eqref{eq:fSI}.
This and the following computations are a straightforward extension of those done in \cite{Dol11}.
Going one step further, we compute
\[
  -\,\G'' = -\frac{\J'}{\J} \,\G' - 4 \,m\, S_{n,s}\, \J^{\frac{2s}n} \mathsf K\,,
\]
with $\mathsf K = \int v^{m-1} \left| (-\Delta)^s v^m - \Lambda\, v\right|^2$, $\Lambda = \frac{n+2s}{2n} \frac{\J'}{\J}$. Then, using the fact that $\G' \le 0$, we have the following:

\begin{lemma}
  \label{lem:comparison}
  With the above notation and assuming $0 < s < 1$,
  \[
  \frac{\G''}{\G'} \le \frac{\J'}{\J}\,.
  \]
\end{lemma}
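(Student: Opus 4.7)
The plan is to read off the desired inequality directly from the identity
\[
  -\,\G'' = -\frac{\J'}{\J}\G' - 4mS_{n,s}\J^{2s/n}\mathsf K
\]
established in the paragraph immediately preceding the lemma, once two sign properties are in place. First, $\mathsf K \ge 0$: its integrand $v^{m-1}\lt|(-\Delta)^s v^m - \Lambda v\rt|^2$ is the product of $v^{m-1}$ (nonnegative, and strictly positive on $\{v>0\}$) and a square. Second, $-\,\G' = 2\,\J^{2s/n}\,\F[u] \ge 0$ by the fractional Sobolev inequality~\eqref{eq:fSI}, so $\G'(t) \le 0$ for every $t > 0$.

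With these in hand, the proof reduces to pure algebra. Rearranging the identity gives
\[
  \G'' - \frac{\J'}{\J}\G' \;=\; 4mS_{n,s}\,\J^{2s/n}\,\mathsf K \;\ge\; 0\,,
\]
and dividing through by $\G'$, which is nonpositive (and strictly negative except when $u(t,\cdot)$ is an Aubin--Talenti extremal), reverses the inequality and yields exactly $\G''/\G' \le \J'/\J$. The degenerate case $\G'(t) = 0$ corresponds to $u(t,\cdot)$ being an extremal of~\eqref{eq:fSI}, and then along the flow $v$ evolves self-similarly as a Barenblatt profile, so both sides of the claimed inequality can be computed directly; alternatively, one restricts to non-extremal initial data where $\G' < 0$ is maintained by continuity on an interval of time.

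The only genuine obstacle, and the reason for the restriction $0 < s < 1$, is not this algebraic step but the justification of the underlying computations. One needs $v(t,\cdot) > 0$ so that $\mathsf K$ is well-defined as an integral of a nonnegative function, and enough regularity to make rigorous the integration-by-parts manipulations used to compute $\G''$. For $0 < s < 1$, the well-posedness, smoothing, and positivity preservation for~\eqref{eq:frac-fde} starting from positive, sufficiently decaying data (e.g.\ Schwartz) are supplied by \cite{dePQuiRodVaz12} and by arguments based on the Caffarelli--Silvestre extension. This analytic input -- not currently available in comparable form for $s \ge 1$ -- is exactly what pins the argument to the range stated in the lemma.
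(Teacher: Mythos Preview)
Your proposal is correct and matches the paper's approach exactly: the paper states the identity $-\G'' = -\frac{\J'}{\J}\G' - 4mS_{n,s}\J^{2s/n}\mathsf K$ and then simply remarks ``using the fact that $\G' \le 0$'' before stating the lemma, so your argument is precisely the intended one, with the details of the sign bookkeeping and the role of the $0<s<1$ hypothesis spelled out.
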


Using Lemma~\ref{lem:comparison} and \eqref{eq:fSI}, we have
\[
  -\,\G' \le \kappa_0\, \J \quad \text{ with } \quad \kappa_0 := \frac{-\,\G'(0)}{\J_0}\,
\]

Since $\J$ is nonincreasing in time, there exists $\Y : [0, \J_0] \to \R$ such that
\[
  \G(t) = \Y(\J(t))\,.
\]
Differentiating with respect to $t$ gives
\[
  -\Y'(\J)\, \J' = -\,\G' \le \kappa_0\, \J\,,
\]
then, substituting $\J'$ in the inequality of Theorem~\ref{maintheorem} (ii) we get
\[
  \mathcal C\left(-\frac{\kappa_0}{p}\, S_{n,s}^2 \frac{\J^{1+\frac{4s}{n}}}{\Y'} + S_{n,s}\, \J^{1+\frac{2s}{n}} \right) + \Y \le 0\,.
\]
With $\Y'=\frac d{dz}\Y$, we end up with the following differential inequality for $\Y$:
\begin{equation}
  \label{eq:ode_y}
  \Y' \left( \mathcal C\, S_{n,s}\, z^{1+\frac{2s}{n}} + Y \right)
  \le \mathcal C \frac{\kappa_0}{p}\, S_{n,s}^2\, z^{1+\frac{4s}{n}}\,,
  \quad
  \Y(0) = 0\,,
  \quad
  \Y(\J_0) = \G(0)\,.
\end{equation}

We have the following estimates. On the one hand
\[
  \Y' \le \frac{p}{\kappa_0}\, S_{n,s}\, z^{\frac {2s}n}
\]
and, hence,
\[
  \Y(z)\le\frac12\,\kappa_0\,S_{n,s}\,z^{1+\frac{2s}n}\quad\forall\,z\in[0,\J_0]\,.
\]
On the other hand, after integrating by parts on the interval $[0,\J_0]$, we get
\[
  \frac12\,\G(0)^2+\mathcal C\, S_{n,s}\,\J_0^{1+\frac{2s}n}\,\G(0)
  \le\frac14\,\mathcal C\,\kappa_0\,S_{n,s}^2\,\J_0^{2+\frac{4s}n}+\frac{n+2s}n\,\mathcal C\,S_{n,s}\,\int_0^{\mathsf J_0}z^\frac{2s}n\,\mathsf Y(z)\;dz\,.
\]
Using the above estimate, we find that
\[
  \frac2p\,S_{n,s}\,\int_0^{\mathsf J_0}z^\frac{2s}n\,\mathsf Y(z)\;dz\le\frac14\,\J_0^{2+\frac{4s}n}\,,
\]
and finally
\[
  \frac12\,\G_0^2-\mathcal C\,S_{n,s}\,\J_0^{1+\frac{2s}n}\,\G_0\le\frac12\,{\mathcal C}\kappa_0\,S_{n,s}^2\,\J_0^{2+\frac{4s}n}\,.
\]
Altogether, we have shown an improved inequality that can be stated as follows.

\begin{theorem}
  \label{thm:improved-nonlinear}
  Assume that $0<s<1$. Then we have
\begin{equation}
  0\le S_{n,s}\,\J^{1+\frac {2s}n}\,\varphi\left(\J^{\frac{2s}n-1}\, \F[u]\right)
  -\,\G[v]\,,
  \quad
\forall\,u\in\dot W^s(\R^n)\,,\;v=u^r
\end{equation}
where $\varphi(x):=\sqrt{\mathcal C^2+2\,\mathcal C\,x}-\mathcal C$ for any $x\ge0$.
\end{theorem}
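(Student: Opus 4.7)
The plan is to integrate the differential inequality \eqref{eq:ode_y} on $[0,\J_0]$, combine the result with the pointwise bound on $\Y$ already established in the section, and then read off the asserted estimate on $\G_0 := \G[v_0]$ as the positive root of a quadratic inequality. Since $v_0$ may be chosen as an arbitrary Schwartz function (the class in which the flow \eqref{eq:frac-fde} is well posed and the formal manipulations above are legitimate), a density argument will promote the resulting inequality to all $u\in\dot W^s(\R^n)$, with $v=u^r$.

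Integrating the left-hand side of \eqref{eq:ode_y} splits into two pieces. The term $\int_0^{\J_0}\Y\Y'\,dz=\tfrac12\G_0^2$ is immediate from $\Y(0)=0$ and $\Y(\J_0)=\G_0$. For the other piece, integration by parts yields
\[
\mathcal C\,S_{n,s}\!\int_0^{\J_0}z^{1+\frac{2s}n}\,\Y'\,dz
=\mathcal C\,S_{n,s}\,\J_0^{1+\frac{2s}n}\,\G_0-\mathcal C\,S_{n,s}\,\frac{n+2s}n\!\int_0^{\J_0}z^{\frac{2s}n}\Y(z)\,dz.
\]
The right-hand side of \eqref{eq:ode_y} integrates explicitly and, after substituting $p=\tfrac{2n}{n+2s}$, equals $\tfrac14\mathcal C\kappa_0\,S_{n,s}^2\,\J_0^{2+\frac{4s}n}$. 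Moving the leftover integral to the right produces exactly the first displayed inequality of the section. Inserting the pointwise bound $\Y(z)\le\tfrac12\kappa_0\,S_{n,s}\,z^{1+\frac{2s}n}$ into that leftover integral contributes a second copy of $\tfrac14\mathcal C\kappa_0\,S_{n,s}^2\,\J_0^{2+\frac{4s}n}$, which combines with the first copy to give the quadratic bound
\[
\tfrac12\G_0^2+\mathcal C\,S_{n,s}\,\J_0^{1+\frac{2s}n}\,\G_0\le\tfrac12\mathcal C\kappa_0\,S_{n,s}^2\,\J_0^{2+\frac{4s}n}.
\]

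Next I would complete the square in $\G_0$, rewriting the above as
\[
\bigl(\G_0+\mathcal C\,S_{n,s}\,\J_0^{1+\frac{2s}n}\bigr)^2\le S_{n,s}^2\,\J_0^{2+\frac{4s}n}\,(\mathcal C^2+\mathcal C\kappa_0).
\]
Both sides are nonnegative, so taking square roots and rearranging yields $\G_0\le S_{n,s}\,\J_0^{1+\frac{2s}n}\bigl(\sqrt{\mathcal C^2+\mathcal C\kappa_0}-\mathcal C\bigr)$. To match the statement of the theorem, I invoke the identity $-\G'(0)=2\,\J_0^{\frac{2s}n}\F[u_0]$ obtained earlier in the section, which gives $\kappa_0=2\,\J_0^{\frac{2s}n-1}\F[u_0]$ and hence $\mathcal C\kappa_0=2\mathcal C\,x$ with $x:=\J_0^{\frac{2s}n-1}\F[u_0]$. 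The bound therefore collapses to $\G_0\le S_{n,s}\,\J_0^{1+\frac{2s}n}\,\varphi(x)$, which is exactly the asserted inequality evaluated at the initial datum. As the initial datum was arbitrary, this concludes the proof modulo density.

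The main obstacle is not a single deep step but the arithmetic bookkeeping: the exponent $1+\tfrac{4s}n$ in the right-hand side of \eqref{eq:ode_y} together with $p=\tfrac{2n}{n+2s}$ must produce precisely the factor $\tfrac14\mathcal C\kappa_0 S_{n,s}^2\J_0^{2+4s/n}$ twice—once from the explicit integration and once from the $\Y$-estimate—so that they sum to a clean $\tfrac12\mathcal C\kappa_0\cdots$, and the factor $\kappa_0$ must then be rewritten via $\F[u_0]$ so that the completion of the square produces $\sqrt{\mathcal C^2+2\mathcal C x}-\mathcal C=\varphi(x)$. Once these identifications are performed, the square-root extraction is harmless because all quantities involved are manifestly nonnegative.
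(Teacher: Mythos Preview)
Your proposal is correct and follows essentially the same route as the paper: integrate the differential inequality \eqref{eq:ode_y} over $[0,\J_0]$ using integration by parts, insert the pointwise bound $\Y(z)\le\tfrac12\kappa_0 S_{n,s} z^{1+2s/n}$ into the residual integral, obtain the quadratic inequality $\tfrac12\G_0^2+\mathcal C\,S_{n,s}\,\J_0^{1+2s/n}\,\G_0\le\tfrac12\mathcal C\kappa_0\,S_{n,s}^2\,\J_0^{2+4s/n}$, extract the positive root, and rewrite $\kappa_0=2\,\J_0^{2s/n-1}\F[u_0]$ to recognise $\varphi$. The paper carries out the integration and the $\Y$-bound in the paragraphs \emph{preceding} the theorem and then, inside the proof proper, simply records the quadratic $y^2+2\mathcal C y-\mathcal C\kappa_0\le0$ with $y=\G_0/(S_{n,s}\J_0^{1+2s/n})$ and reads off the root; your write-up merely redoes those preparatory steps explicitly, but the argument is identical.
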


\begin{proof} We have shown that for $u\in \mathcal S$, $y^2+\,2\,\mathcal C\,y-\,\mathcal C\,\kappa_0\le0$ with $y=\G_0/(S_{n,s}\,\J_0^{1+\frac{2s}n})\ge0$. This proves that $y\le\sqrt{\mathcal C^2+\mathcal C\kappa_0}-\mathcal C$, which proves that
\[
  \G_0\le S_{n,s} \,\J_0^{1+\frac{2s}n}\left(\sqrt{\mathcal C^2+\mathcal C\,\kappa_0}-\mathcal C\right)
\]
after recalling that
\[
  \frac12\,\kappa_0=-\frac{\G'_0}{\J_0}=\J^{\frac{2s}n-1}\,\F[u]\,.
\]
Arguing by density, we recover the results for $u\in \dot W^s(\R^n)$.
\end{proof}

\begin{remark}\label{def-phi}
  We may observe that $x\mapsto x-\varphi(x)$ is a convex nonnegative function which is equal to $0$ if and only if $x=0$. Moreover, we have
\[
\varphi(x)\le x\quad\forall\,x\ge0
\]
with equality if and only if $x=0$. However, one can notice that
\[
\varphi(x)\le\mathcal C\,x\quad\Longleftrightarrow\quad x\ge2\,\frac{1-\mathcal C}{\mathcal C}\,.
\]
\end{remark}

We recall that \eqref{eq:frac-fde} admits special solutions with separation of variables given by
\begin{equation}
  \label{eq:separation}
  v_*(t,x)=\lambda^{-(n+2s)/2}\,(T-t)^\frac{n+2s}{4s}\,u^{\frac{n+2s}{n-2s}}_*\left(\tfrac{x-x_0}{\lambda}\right)
\end{equation}
where $u_*(x):=(1+|x|^2)^{-\frac{n-2s}2}$ is an Aubin-Talenti type extremal function, $x\in\R^n$ and $0<t<T$. Such a solution is generic near the extinction time~$T$, see \cite[Theorem 1.3]{JinXio11}.

\begin{corollary}
  \label{cor:strict_upper_bound}
  With the above notations, $\mathcal C < 1$.
\end{corollary}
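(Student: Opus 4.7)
The plan is to show directly that $\mathcal C<1$ by combining the improved inequality of Theorem~\ref{thm:improved-nonlinear} with the linearization of Section~\ref{sec:linearization}.

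For any non-extremal $u\in\dot W^s(\R^n)$ (so that $\F[u]>0$), setting $x:=\J^{2s/n-1}\F[u]>0$, Theorem~\ref{thm:improved-nonlinear} yields
\[
  R(u):=\frac{\G[u^r]}{S_{n,s}\,\J^{4s/n}\,\F[u]}\le \frac{\varphi(x)}{x}=\frac{2\,\mathcal C}{\sqrt{\mathcal C^{2}+2\,\mathcal C\,x}+\mathcal C}.
\]
The right-hand side is continuous and strictly decreasing in~$x$ on $(0,\infty)$, with limits~$1$ as $x\to 0^{+}$ and~$0$ as $x\to\infty$; in particular $\varphi(x)/x<1$ whenever $x>0$.

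Next, pick a sequence $(u_n)\subset \dot W^s(\R^n)$ with $R(u_n)\to \mathcal C$, and set $x_n:=\J[u_n^r]^{2s/n-1}\F[u_n]$. If $\liminf_n x_n\ge\delta>0$, then $R(u_n)\le \varphi(\delta)/\delta<1$, yielding $\mathcal C<1$ at once. Otherwise, after extracting a subsequence, $x_n\to 0$. Using the scale invariance $u(x)\mapsto \lambda^{(n-2s)/2}u(\lambda x)$ of~$R$, normalize $\J[u_n^r]=\J[u_\ast^r]$, so that $\F[u_n]\to 0$: $(u_n)$ is an almost-minimizing sequence for~\eqref{eq:fSI}. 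Invoking a Bianchi--Egnell type quantitative stability for the fractional Sobolev inequality, we may, after a further translation and Aubin--Talenti rescaling, write $u_n=u_\ast+\epsilon_n g_n$ with $\epsilon_n\to 0$, $\|g_n\|_s=1$, and $g_n$ satisfying the orthogonality conditions~\eqref{eq:ortho}. The asymptotic expansions of Section~\ref{sec:linearization}, together with~\eqref{eq:finish}, then give
\[
  R(u_n)=\frac{r^2\,\mathrm G[g_n]}{S_{n,s}\,\J[u_\ast^r]^{4s/n}\,\mathrm F[g_n]}+o(1)\le\frac{n-2s+2}{n+2s+2}+o(1),
\]
so that $\mathcal C\le (n-2s+2)/(n+2s+2)<1$ in this case as well.

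The main obstacle is the Bianchi--Egnell type stability step: converting an almost-extremal sequence for the fractional Sobolev inequality into genuine convergence to an Aubin--Talenti profile, modulo the noncompact symmetries of~\eqref{eq:fSI}. Such a result is available in the literature for $0<s<1$; without it, Theorem~\ref{thm:improved-nonlinear} on its own only gives $\mathcal C\le 1$ through the pointwise bound $\varphi(x)\le x$, which is not enough to exclude the equality case $\mathcal C=1$.
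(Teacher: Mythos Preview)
Your argument follows essentially the same route as the paper's own proof: split into the two cases ``$\F[u_n]$ bounded away from zero'' versus ``$\F[u_n]\to 0$'', use the nonlinear improvement of Theorem~\ref{thm:improved-nonlinear} in the first case, and reduce to the linearized quotient of Section~\ref{sec:linearization} in the second. The paper phrases this as a contradiction argument assuming $\mathcal C=1$, but the structure is identical.

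Two small points are worth noting. First, the conformal dilation $u(x)\mapsto\lambda^{(n-2s)/2}u(\lambda x)$ that you invoke actually \emph{preserves} $\J=\|u\|_{L^q}^q$, so it cannot be used to normalize $\J[u_n^r]=\J[u_\ast^r]$; you should instead use the scalar homogeneity $u\mapsto c\,u$, under which $R$ is invariant of degree zero. Second, in the degenerate case the paper reaches convergence to an Aubin--Talenti profile via Lieb's compactness for HLS maximizing sequences (observing that $\G[u_n^r]\to 0$ as well), whereas you invoke a Bianchi--Egnell type stability estimate for the fractional Sobolev inequality. Both are legitimate external inputs for $0<s<1$; the Bianchi--Egnell route has the advantage of directly producing the orthogonal decomposition $u_n=u_\ast+\epsilon_n g_n$ with $g_n$ satisfying~\eqref{eq:ortho}, since one selects the nearest point on the extremal manifold. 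One should, however, justify that the second-order expansions~\eqref{eq:mFsecond} and~\eqref{eq:mGsecond} hold uniformly for the varying perturbations $g_n$ with $\|g_n\|_s=1$ (the paper is equally brief on this point).
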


\begin{proof}
  Argue by contradiction and suppose $\mathcal C = 1$.
  Let $(u_k)$ be a minimizing sequence for the quotient $u \mapsto \frac{\F[u]}{\G[u^r]}$.
  Thanks to homogeneity, we can assume that $\J[u_k] = \J_* = \J[u_*]$ with $\J_*$ fixed, so that
  in fact $\G[u_k^r]$ is a bounded sequence.
  There are two possibilities.
  Either $\lim_{k\to\infty} \G[u_k^r] > 0$, and then, up to a subsequence, $\lim_{k\to\infty} \F[u_k] > 0$, and then
  \begin{multline*}
    0 = \lim_{k\to\infty} \left( S_{n,s}\, \J_*^{\frac{4s}n} \F[u_k] - \G[u_k] \right)
    \\
    = \lim_{k\to\infty} \left( S_{n,s}\, \J_*^{\frac{4s}n} \F[u_k] - S_{n,s}\, \J_*^{1+\frac{2s}n} \varphi\left( \J_*^{\frac{2s}n-1} \F[u_k] \right) \right)
    \\
    + \lim_{k\to\infty} \left( S_{n,s}\, \J_*^{1+\frac{2s}n} \varphi\left( \J_*^{\frac{2s}n-1} \F[u_k] \right) - \G[u_k] \right)\,.
  \end{multline*}
  The last term is nonnegative by Theorem~\ref{thm:improved-nonlinear}, and since $\lim_{k \to \infty} \F[u_k] > 0$, the first term is positive
  because of the properties of $\varphi$, see Remark~\ref{def-phi}.
  This is a contradiction, so in fact we have $\lim_{k \to \infty} G[u_k^r] = \lim_{k \to \infty} \F[u_k] = 0$.
  Since $\J[u_k] = \J_*$, $v_k = u_k^r$ maximizes
  \[
    \left\{ \int_{\R^n} v (-\Delta)^{-s} v \; dx \; : \; \Vert v \Vert_{\frac{2n}{n+2s}} = \J_*\right\}
  \]
  According to \cite[Theorem 3.1]{Lieb83}, up to translations and dilations, $v_k$ converges to $v_* = u_*^r$,
  and then the limit of the quotient $\frac{\mathcal F[u_k]}{\mathcal G[u_k^r]}$ is given by the linearization around
  the Aubin-Talenti profiles. That is
  \[
     \frac 1{S_{n,s}} = \lim_{k\to\infty} \frac{\mathcal F[u_k]}{\mathcal G[u_k^r]} \geq \frac{n+2s+2}{n-2s+2} \frac 1{S_{n,s}} \, ,
  \]
which is a contradiction. Thus, $C_{n,s}^* < S_{n,s}$.
\end{proof}

Inequality \eqref{eq:bestconstant_upper} holds by Corollary~\ref{cor:strict_upper_bound}, and the proof of Theorem~\ref{maintheorem} is complete.



\vfill
\hrule
\medskip
\noindent{\small{\bf Acknowlegments.} The authors want to thank Jean Dolbeault and Yannick Sire for their valuable input and comments. V.H.N is supported by a grant from the European Research Council and G.J. by the \emph{STAB}, \emph{NoNAP} and \emph{Kibord} (ANR-13-BS01-0004) projects of the French National Research Agency (ANR).
\par\medskip\noindent{\small\copyright\,2014 by the authors. This paper may be reproduced, in its entirety, for non-commercial purposes.}}

\clearpage
\bibliographystyle{abbrv}
\bibliography{JankowiakNguyen}

\begin{thebibliography}{10}

\bibitem{AS72}
M.~Abramowitz and I.~A. Stegun.
\newblock {\em {Handbook of Mathematical Functions}}.
\newblock National Bureau of Standards, Washington, D.C., 1966.

\bibitem{Aub76}
T.~Aubin.
\newblock {Probl\`{e}mes isop\'{e}rim\'{e}triques et espaces de {S}obolev}.
\newblock {\em J. Differential Geometry}, 11(4):573--598, 1976.

\bibitem{Bec93}
W.~Beckner.
\newblock {Sharp {S}obolev inequalities on the sphere and the
  {M}oser-{T}rudinger inequality}.
\newblock {\em Ann. of Math. (2)}, 138(1):213--242, 1993.

\bibitem{BidVer91}
M.-F. Bidaut-V\'{e}ron and L.~V\'{e}ron.
\newblock {Nonlinear elliptic equations on compact Riemannian manifolds and
  asymptotics of Emden equations}.
\newblock {\em Inventiones mathematicae}, 106(1):489--539, 1991.

\bibitem{CC08}
V.~Calvez, L.~Corrias, et~al.
\newblock {The parabolic-parabolic Keller-Segel model in $\mathbb R^2$}.
\newblock {\em Communications in Mathematical Sciences}, 6(2):417--447, 2008.

\bibitem{CCL10}
E.~A. Carlen, J.~A. Carrillo, and M.~Loss.
\newblock {Hardy-Littlewood-Sobolev inequalities via fast diffusion flows}.
\newblock {\em Proceedings of the National Academy of Sciences},
  107(46):19696--19701, 2010.

\bibitem{CL92}
E.~A. Carlen and M.~Loss.
\newblock {Competing symmetries, the logarithmic {HLS} inequality and
  {O}nofri's inequality on {$\mathbb S\sp n$}}.
\newblock {\em Geom. Funct. Anal.}, 2(1):90--104, 1992.

\bibitem{CotTav04}
A.~Cotsiolis and N.~K. Tavoularis.
\newblock {Best constants for Sobolev inequalities for higher order fractional
  derivatives}.
\newblock {\em Journal of mathematical analysis and applications},
  295(1):225--236, 2004.

\bibitem{dePQuiRodVaz12}
A.~de~Pablo, F.~Quir\'{o}s, A.~Rodr\'{\i}guez, and J.~L. V\'{a}zquez.
\newblock {A general fractional porous medium equation}.
\newblock {\em Communications on Pure and Applied Mathematics},
  65(9):1242--1284, 2012.

\bibitem{DD02}
M.~del Pino and J.~Dolbeault.
\newblock {Best constants for {G}agliardo-{N}irenberg inequalities and
  applications to nonlinear diffusions}.
\newblock {\em J. Math. Pures Appl. (9)}, 81(9):847--875, 2002.

\bibitem{Dol11}
J.~Dolbeault.
\newblock {{S}obolev and {H}ardy-{L}ittlewood-{S}obolev inequalities: duality
  and fast diffusion}.
\newblock {\em Math. Res. Lett.}, 18(06):1037--1050, 2011.

\bibitem{DEJ14}
J.~Dolbeault, M.~J. Esteban, and G.~Jankowiak.
\newblock {The Moser-Trudinger-Onofri inequality}.
\newblock Pre-print, arXiv:1403.5042, 2014.

\bibitem{Dol-Jan14}
J.~Dolbeault and G.~Jankowiak.
\newblock {Sobolev and Hardy-Littlewood-Sobolev inequalities}.
\newblock Pre-print, arXiv:1312.2568, 2014.

\bibitem{EMOT81}
A.~Erd\'{e}lyi, W.~Magnus, F.~Oberhettinger, and F.~G. Tricomi.
\newblock {\em {Higher transcendental functions. {V}ol. {II}}}.
\newblock Robert E. Krieger Publishing Co. Inc., Melbourne, Fla., 1981.
\newblock Based on notes left by Harry Bateman, Reprint of the 1953 original.

\bibitem{FL10}
R.~L. Frank and E.~H. Lieb.
\newblock {Inversion positivity and the sharp Hardy--Littlewood--Sobolev
  inequality}.
\newblock {\em Calculus of Variations and Partial Differential Equations},
  39(1-2):85--99, 2010.

\bibitem{Frank-Lieb12}
R.~L. Frank and E.~H. Lieb.
\newblock {A new, rearrangement-free proof of the sharp
  Hardy--Littlewood--Sobolev inequality}.
\newblock In {\em {Spectral theory, function spaces and inequalities}}, pages
  55--67. Springer, 2012.

\bibitem{FL12}
R.~L. Frank and E.~H. Lieb.
\newblock {Sharp constants in several inequalities on the {H}eisenberg group}.
\newblock {\em Ann. of Math. (2)}, 176(1):349--381, 2012.

\bibitem{JinXio11}
T.~Jin and J.~Xiong.
\newblock {A fractional Yamabe flow and some applications}.
\newblock {\em Journal f\"{u}r die reine und angewandte Mathematik (Crelles
  Journal)}, 2011.

\bibitem{Lieb83}
E.~H. Lieb.
\newblock {Sharp constants in the {H}ardy-{L}ittlewood-{S}obolev and related
  inequalities}.
\newblock {\em Ann. of Math. (2)}, 118(2):349--374, 1983.

\bibitem{Moser71}
J.~Moser.
\newblock {A sharp form of an inequality by {N}. {T}rudinger}.
\newblock {\em Indiana Univ. Math. J.}, 20:1077--1092, 1970/71.

\bibitem{Onofri82}
E.~Onofri.
\newblock {On the positivity of the effective action in a theory of random
  surfaces}.
\newblock {\em Comm. Math. Phys.}, 86(3):321--326, 1982.

\bibitem{Ros71}
G.~Rosen.
\newblock {Minimum value for {$c$} in the {S}obolev inequality {$\Vert
  \phi\sp{3}\Vert \leq c\,\Vert \nabla \phi\Vert \sp{3}$}}.
\newblock {\em SIAM J. Appl. Math.}, 21:30--32, 1971.

\bibitem{Swa92}
C.~A. Swanson.
\newblock {The best Sobolev constant}.
\newblock {\em Applicable Analysis}, 47(1-4):227--239, 1992.

\bibitem{Tal76}
G.~Talenti.
\newblock {Best constant in {S}obolev inequality}.
\newblock {\em Ann. Mat. Pura Appl. (4)}, 110:353--372, 1976.

\end{thebibliography}

\end{document}